\documentclass[11pt]{article}

\usepackage[T1]{fontenc}
\usepackage[a4paper,margin=1in]{geometry}
\usepackage{amsmath,amssymb,amsthm,mathtools}
\usepackage{newtxtext,newtxmath}
\usepackage{needspace}
\usepackage{enumitem}
\usepackage{booktabs}
\usepackage{tikz}
\usepackage{float}
\usepackage{microtype}
\usepackage[nocompress]{cite}
\usepackage{hyperref}
\hypersetup{
  hidelinks,
  pdftitle={Nice Partitions of Signed-Graphic Arrangements Are Inductive Factorizations},
  pdfauthor={Weikang Liang, Suijie Wang, and Yue Zhou},
  pdfsubject={Hyperplane arrangements, signed graphs, and nice partitions},
  pdfkeywords={hyperplane arrangements, signed graphs, frame matroids, nice partitions,
    inductive factorizations, Terao conjecture}
}
\allowdisplaybreaks[3]

\theoremstyle{plain}
\newtheorem{theorem}{Theorem}[section]
\newtheorem{proposition}[theorem]{Proposition}
\newtheorem{corollary}[theorem]{Corollary}
\newtheorem{lemma}[theorem]{Lemma}
\theoremstyle{definition}
\newtheorem{definition}[theorem]{Definition}
\newtheorem{example}[theorem]{Example}
\theoremstyle{remark}

\newcommand{\A}{\mathcal A}

\newcommand{\cl}{\operatorname{cl}}
\newcommand{\rk}{\operatorname{rk}}
\numberwithin{equation}{section}

\renewenvironment{abstract}
  {\begin{list}{}{\setlength{\leftmargin}{\parindent}\setlength{\rightmargin}{\parindent}}%
   \item\relax\small\noindent\textsc{Abstract.}\ }
  {\end{list}}
\renewcommand{\footnoterule}{\kern-3pt\hrule width 1in\kern2.6pt}

\begin{document}
\begin{center}
  {\Large\bfseries
    Nice Partitions of Signed-Graphic Arrangements\\
    Are Inductive Factorizations\par}

  \vspace{18pt}

  {\normalsize
    Weikang Liang, Suijie Wang and Yue Zhou\par}
\end{center}

\begingroup
\renewcommand{\thefootnote}{}
\makeatletter
\let\@footnotetext\H@@footnotetext
\long\def\@makefntext#1{\parindent=1.8em\noindent\hspace*{\parindent}#1}
\makeatother
\footnotetext{%
\textit{2020 Mathematics Subject Classification.} Primary 52C35; Secondary 05B35, 05C22.\par
\textit{Key words and phrases.} Hyperplane arrangement, signed graph, nice partition, inductive factorization, free arrangement, Terao's conjecture.\par
Corresponding author: Suijie Wang.}
\endgroup

\vspace{-6pt}

\begin{abstract}
Nice partitions of hyperplane arrangements give tensor factorizations
of Orlik--Solomon algebras as graded vector spaces. In general, a
hyperplane arrangement admitting a nice partition need not be free.
In this paper, we consider signed-graphic arrangements, the
subarrangements of type-\(B\) Weyl arrangements, over fields of
characteristic different from \(2\). We prove that every nice partition
of a signed-graphic arrangement is an inductive factorization and hence
that the arrangement is free, with nonzero exponents given by the part
sizes. In particular, Terao's conjecture holds for the intersection
lattices of signed-graphic arrangements admitting a nice partition.
\end{abstract}

\section{Introduction}

A central hyperplane arrangement is a finite set of linear hyperplanes
in a finite-dimensional vector space. In this paper, we study nice
partitions of signed-graphic arrangements over a field \(\mathbb K\)
of characteristic different from \(2\). These arrangements are the
subarrangements of the type-\(B\) Weyl arrangements and can be described
by signed graphs \cite{Zaslavsky1991,Zaslavsky2003}. A signed graph
\(\Sigma=(G^+,G^-,L)\) consists of two simple graphs on a common finite
vertex set \(V\), whose edges are positive and negative, respectively,
and a subset \(L\subseteq V\) specifying the vertices with loops.
The associated arrangement \(\A(\Sigma)\) in \(\mathbb K^V\) consists
of the hyperplanes \(x_i=x_j\) for positive edges \(ij\),
\(x_i=-x_j\) for negative edges \(ij\), and \(x_i=0\) for \(i\in L\).
Graphical arrangements are obtained when all edges are positive and
there are no loops.

A partition \(\pi\) of an arrangement \(\A\) is called
\emph{independent} if every choice of one hyperplane from each part
has linearly independent defining forms. It is called \emph{nice} if
it is independent and, for every intersection \(X\) of a nonempty
subfamily of \(\A\), some part contains exactly one hyperplane
containing \(X\). Every supersolvable arrangement admits a nice
partition. Terao~\cite{Terao1992} introduced nice partitions and proved
that a partition is nice if and only if it induces a tensor factorization
of the Orlik--Solomon algebra as a graded vector space.
In particular, the characteristic polynomial factors into linear
terms, with the sizes of the parts as its nonzero roots.
A similar factorization holds for free arrangements, with the
exponents as the roots of the characteristic polynomial by the
factorization theorem of Terao~\cite[Theorem~4.137]{OrlikTerao1992}.
Nevertheless, neither niceness nor freeness implies the other in
general \cite{Terao1992}. Jambu and Paris~\cite{JambuParis1995}
introduced \emph{inductive factorizations}, which are nice partitions
satisfying a recursive condition involving deletion and restriction,
and proved that every inductively factored arrangement is free.

M\"oller and R\"ohrle~\cite[Theorem~1.6]{MollerRoehrleNiceRestrictions2017}
proved that niceness and inductive factorization coincide for proper
restrictions of irreducible complex reflection arrangements.
For reflection arrangements themselves, however, Hoge and
R\"ohrle~\cite[Theorem~1.3 and Corollary~1.4]{HogeRoehrleNice2016}
showed that inductive factorization is equivalent to supersolvability
and is strictly stronger than niceness.
For signed-graphic arrangements, Edelman and Reiner~\cite{EdelmanReiner1994}
classified the free and the supersolvable real arrangements between
the type-\(A_{n-1}\) and type-\(B_n\) Weyl arrangements.
Bailey~\cite{BaileySignedGraphic} extended this study to nice partitions
and inductive factorizations, proving that the two properties coincide
in this interval and in several other families of signed-graphic
arrangements.
He conjectured the equivalence for all signed-graphic arrangements
\cite[Conjecture~4.5]{BaileySignedGraphic}.
In this paper, we prove Bailey's conjecture in the stronger form that
every nice partition of a signed-graphic arrangement is an inductive
factorization.

\begin{theorem}\label{thm:main-intro}
Let \(\Sigma=(G^+,G^-,L)\) be a finite signed graph, and let \(\pi\) be a
nice partition of \(\A(\Sigma)\). Then \(\pi\) is an inductive
factorization of \(\A(\Sigma)\).
\end{theorem}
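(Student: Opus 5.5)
We argue by induction on \(|\A(\Sigma)|\), using the Jambu--Paris definition of inductive factorization. A nice partition \(\pi=(\pi_1,\dots,\pi_s)\) of \(\A\) is an inductive factorization if either \(s\le 1\), or there is a hyperplane \(H\in\pi_i\) with \(|\pi_i|\ge 2\) such that two conditions hold. First, the restricted partition \(\pi'' = (\pi\setminus\pi_i)^H\) is an inductive factorization of \(\A^H\), with the restriction map being a bijection from \(\bigcup_{j\ne i}\pi_j\) onto \(\A^H\). Second, the deleted partition \(\pi'=\pi\setminus\{H\}\) is an inductive factorization of \(\A\setminus\{H\}\). (If \(|\pi_i|=1\) the deletion step is trivial, and we pass to a smaller essential arrangement.) The key structural input is that the class of signed-graphic arrangements is closed under deletion and under restriction. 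Deleting \(H\) removes an edge or loop. Restricting to \(H\) contracts that edge: for \(x_i=x_j\) we identify \(i\) and \(j\); for \(x_i=-x_j\) we substitute \(x_j=-x_i\), i.e.\ switch at \(j\) and then identify; for the loop \(x_i=0\) we delete vertex \(i\) and convert every edge at \(i\) into a loop at its other endpoint. In characteristic \(\ne 2\), the two edges \(x_i=\pm x_j\) cannot merge except into such a loop. So \(\A^H\) is again \(\A(\Sigma/H)\), and induction applies once niceness of the induced partitions is established.

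The proof then has two steps. First, we show that if \(\pi\) is nice and \(H\) is suitably chosen, then \(\pi\setminus\{H\}\) is nice for \(\A\setminus\{H\}\). Independence is automatic. For the ``unique hyperplane'' condition, we must handle flats \(X\) of \(\A\setminus\{H\}\) where the only witnessing part was \(\pi_i\) and the witness was \(H\) itself. Second, we show that restriction to \(H\) is injective on \(\bigcup_{j\neq i}\pi_j\) and that the image partition is nice. Here we use Terao's restriction lemma for nice partitions: for \(H\) in a part, the condition that no two hyperplanes outside \(\pi_i\) have the same restriction is exactly what forces \(\pi^H\) to be nice. Independence of \(\pi\) already gives injectivity across distinct parts. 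Within one part \(\pi_j\), two hyperplanes \(K,K'\) with \(K\cap H=K'\cap H\) span, together with \(H\), a rank-2 flat \(X\). Niceness at \(X\) forces \(X\) to contain exactly one hyperplane from some part. If that part is \(\pi_j\) we get a contradiction, since it contains both \(K\) and \(K'\). So the rank-2 flat must be witnessed by the part of \(H\), which constrains the configuration.

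The main obstacle is choosing \(H\) so that both conditions hold at once. This is where the signed-graph structure must be used. I would classify the rank-2 flats of \(\A(\Sigma)\) combinatorially. They are: triangles (balanced or unbalanced) with three edges; pairs of edges \(x_i=\pm x_j\) together with the loops at \(i\) and \(j\), giving a full \(B_2\) with four hyperplanes; a loop plus an adjacent edge with loop at the other end, giving three hyperplanes; and pairs of disjoint or non-interacting elements, giving two hyperplanes. Since \(\pi\) is nice, each rank-2 flat with \(\ge 3\) hyperplanes meets the parts in a pattern \((1,m-1)\) or \((1,1,\dots)\), but at most two parts by independence. So each such flat has exactly one ``singleton'' hyperplane and the rest lying in one part.

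I would pick the last part \(\pi_s\) and look for \(H\in\pi_s\) that is never the singleton of a flat whose other hyperplanes all lie in \(\pi_s\). I also want it to be the singleton of every flat meeting \(\pi_s\) only in \(H\) and another part in \(\ge 2\) elements. I would choose \(H\) as a ``leaf-like'' element. When \(\pi_s\) contains loops, take an edge or loop incident to a vertex of minimal degree in the subgraph of \(\pi_s\); when it contains none, argue via the graphic case, where \(\pi_s\) is essentially a star at a simplicial-type vertex. I would expect a case analysis over the four rank-2 flat types, with the \(B_2\) flats and the loop-edge-loop flats being the delicate cases. They are the only places where signs and characteristic \(\ne 2\) matter. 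Having verified that such an \(H\) exists, induction on both \(\A\setminus\{H\}\) and \(\A^H\) completes the proof. Freeness then follows by Jambu--Paris.
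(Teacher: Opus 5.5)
Your skeleton is the paper's. You induct on \(|\A|\), use closure of signed-graphic arrangements under deletion and restriction, and seek \(H\) with \(\rho\) bijective. You also correctly see that injectivity fails exactly when \(H\) is the singleton of a line whose other points lie in one other part. The paper calls such an \(H\) unsafe. Surjectivity of \(\rho\) always holds by the singleton condition. For a \(\pi\)-safe \(H\), niceness of both \(\pi'\) and \(\pi''\) then follows from M\"oller--M\"ucksch--R\"ohrle, so your separate deletion step is unnecessary.

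\textbf{The gap.} The theorem reduces entirely to showing that \emph{every} nice partition of \emph{every} signed-graphic arrangement has a \(\pi\)-safe element, and you do not prove this. Phrases like ``a vertex of minimal degree'' and ``I would expect a case analysis'' are placeholders, not arguments. Your stated criteria are also wrong as written:
\begin{itemize}
\item No \(H\in\pi_s\) can be the singleton of a line whose other points lie in \(\pi_s\), since that line would lie in one part, violating independence. So your first condition is vacuous.
\item The lines meeting \(\pi_s\) only in \(H\) and another part in at least two elements are exactly the obstructions to injectivity. You need there to be none, not that \(H\) be their singleton.
\item The part cannot be fixed in advance, and safety is not a local degree condition. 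Take the graphical arrangement of \(K_4\) with nice partition \(\{H_{12}^{+}\},\{H_{13}^{+},H_{23}^{+}\},\{H_{14}^{+},H_{24}^{+},H_{34}^{+}\}\). The bad circuits \(\{H_{13}^{+},H_{14}^{+},H_{34}^{+}\}\) and \(\{H_{23}^{+},H_{24}^{+},H_{34}^{+}\}\) show that a nonsingleton part can contain no admissible \(H\).
\item Three links of an unbalanced triangle are independent, not a line. Such triangles occur as whole parts, which your classification of lines misses.
\end{itemize}

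\textbf{What the paper does instead.} It supplies the existence of a safe element by a global argument.
\begin{itemize}
\item It first shows that every part is an augmented star, a \(B_2\)-part, or an unbalanced triangle part with all opposite links present.
\item At \(u\notin L\) it follows anchored routes: from an unsafe link at \(u\), a bad circuit leads to another link at \(u\). Telescoping \(\alpha_{i+1}-\alpha_i\) against independence of \(\pi\) shows no part is entered twice. Hence every route ends at a safe link or in a triangle part.
\item In the loopless case, assuming no safe element, it combines the core covering \(\bigcup_B K(B)=V\) with uniqueness of the singleton part. This forces a single triangle part spanning all vertices, whose elements are then safe.
\item For \(L=V\), an acyclic orientation of the links among vertices with singleton cores yields a safe \(h_v\).
\item In the mixed case it shows that a counterexample has no link between \(U\) and \(L\). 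This requires excluding \(N_{\triangle}\), the \(\mathsf P_j\), and two size-three parts in \(\mathsf F_{s,\ell}\), using the count \(\lambda_2\) and characteristic polynomials.
\end{itemize}
Without an argument of this kind, or a genuine replacement, your induction has no step.
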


The assertion concerns every nice partition, not merely the existence
of an inductive factorization. Indeed, an inductively factored
arrangement may admit nice partitions that are not inductive
factorizations \cite[Example~3.20]{HogeRoehrle2016}.
Since inductive factorization implies freeness, we also obtain the
following consequence.

\begin{corollary}\label{cor:nice-implies-free-intro}
Every signed-graphic arrangement that admits a nice partition is free.
\end{corollary}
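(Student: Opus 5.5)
The corollary follows directly from Theorem~\ref{thm:main-intro} combined with the theorem of Jambu and Paris~\cite{JambuParis1995}. Let \(\A=\A(\Sigma)\) be a signed-graphic arrangement admitting a nice partition \(\pi\). By Theorem~\ref{thm:main-intro}, \(\pi\) is an inductive factorization of \(\A\). Jambu and Paris proved that every arrangement admitting an inductive factorization is free. Hence \(\A\) is free.

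Their argument also identifies the exponents. For the plan, I recall its inductive shape, since this is what pins the exponents to the part sizes. In an inductive factorization \(\pi=(\pi_1,\dots,\pi_\ell)\) there is a distinguished hyperplane \(H\) forming a singleton part. The deletion \(\A'=\A\setminus\{H\}\) inherits the induced partition \(\pi'\), which is again an inductive factorization. The restriction \(\A''=\A^H\) inherits the partition induced via \(Y\mapsto Y\cap H\), which is also an inductive factorization. Both have part sizes of the expected form: the part of \(H\) is dropped in \(\A''\), and shrunk by one in \(\A'\).

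Applying induction on \(|\A|\), both \(\A'\) and \(\A''\) are free with exponents given by their part sizes. These exponent multisets differ in exactly the way required by Terao's addition theorem~\cite[Theorem~4.51]{OrlikTerao1992}. The addition theorem therefore yields that \(\A\) is free, with exponents \(|\pi_1|,\dots,|\pi_\ell|\) together with zeros.

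There is no real obstacle here: all the difficulty lives in Theorem~\ref{thm:main-intro}. The only point to check is that the inductive factorization notion used in the theorem is exactly that of Jambu--Paris. This holds by the definition cited in the introduction.
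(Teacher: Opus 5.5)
Your argument is correct and matches the paper: Theorem~\ref{thm:main-intro} makes \(\pi\) an inductive factorization, and the Jambu--Paris theorem that inductively factored arrangements are free gives the corollary. One correction to your aside: the hyperplane \(H_0\) removed at each step lies in some part \(\pi_1\) that need not be a singleton (in this paper it is any \(\pi\)-safe element). That is why the deletion shrinks this part by one while the restriction drops it, which is what you correctly wrote next.
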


Terao's conjecture~\cite[Conjecture~4.138]{OrlikTerao1992} asserts that
freeness over a fixed field is determined by the intersection lattice
\(L(\A)\), consisting of the intersections of subfamilies of
hyperplanes of \(\A\), ordered by reverse inclusion. Since niceness and inductive
factorization are determined by the intersection lattice, every
arrangement with the same intersection lattice as a nice signed-graphic
arrangement is inductively factored, and hence free. Terao's conjecture therefore holds for the intersection lattices of
signed-graphic arrangements admitting a nice partition. Moreover, if
\(\pi=(\pi_1,\ldots,\pi_r)\) is the original nice partition, every such
arrangement has nonzero exponents \(|\pi_1|,\ldots,|\pi_r|\).

For a real arrangement, the connected components of its complement
are called regions. Jambu and Paris~\cite{JambuParis1995} related
inductive factorizations to the poset of regions and its
rank-generating function. As a consequence of their results and our
theorem, every real signed-graphic arrangement with a nice partition
\(\pi=(\pi_1,\ldots,\pi_r)\) admits a base region for which the poset
of regions is a lattice with rank-generating function
\(\prod_{i=1}^r(1+t+\cdots+t^{|\pi_i|})\).

Section~\ref{sec:preliminaries} recalls the required facts about
signed-graphic arrangements, nice partitions, and inductive
factorizations. Section~\ref{sec:local-structure} describes the structure
of the parts and introduces anchored routes, which yield a safe element
when no coordinate hyperplane is present. Section~\ref{sec:coordinate-safe}
proves the existence of a safe element in general and completes the
proof of the main theorem. Section~\ref{sec:realization-space} develops
the consequences for freeness and Terao's conjecture, and concludes
with the consequence for regions of real arrangements.

\section{Preliminaries}
\label{sec:preliminaries}

\subsection{Signed-graphic arrangements}

Let \(\Sigma=(G^+,G^-,L)\) be a signed graph on a finite vertex set
\(V\), where \(G^+\) and \(G^-\) are simple graphs on \(V\) and
\(L\subseteq V\). The edges of \(G^+\) and \(G^-\) are the positive
and negative links, respectively. A loop is present at each vertex of
\(L\). For distinct vertices \(i,j\), write \(ij=\{i,j\}\).
There is a \emph{double edge} on \(ij\) if both links are present. The
\emph{underlying simple graph} has vertex set \(V\) and edge set
\(E(G^+)\cup E(G^-)\). For background on signed graphs and their
frame matroids, see Zaslavsky~\cite{Zaslavsky1991,Zaslavsky2003}.

Throughout, let \(\mathbb K\) be a field of characteristic different
from \(2\). In \(\mathbb K^V\), put \(h_i=\ker x_i\) and
\(H_{ij}^{\sigma}=\ker(x_i-\sigma x_j)\) for \(\sigma\in\{+1,-1\}\).
The signed-graphic arrangement associated with \(\Sigma\) is
\[
\A(\Sigma)=\{h_i:i\in L\}
\cup\{H_{ij}^{+}:ij\in E(G^+)\}
\cup\{H_{ij}^{-}:ij\in E(G^-)\}.
\]
Let \((e_i)_{i\in V}\) be the standard basis. The corresponding normal
vectors \(e_i\) and \(e_i-\sigma e_j\) represent the simple frame matroid
\(F(\Sigma)\). We identify each edge of \(\Sigma\) with the corresponding
hyperplane and matroid element, and call the elements \(h_i\) coordinate
elements. The support of an element is the set of indices at which its
defining normal is nonzero. Thus \(\operatorname{supp}(h_i)=\{i\}\) and
\(\operatorname{supp}(H_{ij}^{\sigma})=\{i,j\}\). The support of a set
of elements is the union of their supports. The opposite of a link
\(e=H_{ij}^{\sigma}\) is \(\bar e=H_{ij}^{-\sigma}\), when present.
A matroid is signed-graphic if it is isomorphic to the frame matroid
of a signed graph.

Cycles in this paper contain only links, and cycles of length two, called
digons, are allowed. A cycle is balanced if the product of its signs
is \(+1\), and unbalanced otherwise. A subgraph is balanced if it
contains no loop and every cycle is balanced. Switching by a function
\(\varepsilon:V\to\{+1,-1\}\) replaces the sign \(\sigma\) on
\(ij\) by \(\varepsilon_i\sigma\varepsilon_j\) and leaves the loops
unchanged. This operation is induced by the coordinate change
\(x_i\mapsto\varepsilon_i x_i\), so it preserves the frame matroid.

For an edge set \(S\), let \(V(S)\) be the set of vertices incident
with its edges, and let \(b(S)\) be the number of balanced components
of \((V(S),S)\). By \cite[Theorem~2.1(j)]{Zaslavsky1991},
\begin{equation}\label{eq:frame-rank}
	\rk_{F(\Sigma)}(S)=|V(S)|-b(S).
\end{equation}
A circuit is a minimally dependent set of matroid elements.
Every balanced cycle is a circuit and every unbalanced cycle is
independent, by the rank formula \eqref{eq:frame-rank}.
A three-element subset of \(\A(\Sigma)\) is a circuit if and only if
it has one of the following forms \cite{Zaslavsky1991}:
\begin{equation}\label{eq:signed-frame-three-circuits}
	\{h_i,h_j,H_{ij}^{\sigma}\},\qquad
	\{h_i,H_{ij}^{+},H_{ij}^{-}\},\qquad
	\{H_{ij}^{\alpha},H_{jk}^{\beta},H_{ik}^{\alpha\beta}\}.
\end{equation}
Here \(i\ne j\) in the first two forms. In the third, \(i,j,k\)
are distinct and the three links form a balanced triangle.

\subsection{Nice partitions}
\label{subsec:nice-partitions}

For a central arrangement \(\A\), write \(M(\A)\) for the simple matroid
represented by its defining forms. All matroid terminology for \(\A\)
refers to \(M(\A)\). A line is a rank-two flat. For a matroid \(M\),
write \(\cl_M\) for its closure operator and \(\chi_M(t)\) for its
characteristic polynomial. If \(\A\) has rank \(r\) in an
\(\ell\)-dimensional space, its characteristic polynomial satisfies
\(\chi_{\A}(t)=t^{\ell-r}\chi_{M(\A)}(t)\).

Given a simple matroid \(M\) on \(E\)
and a partition \(\pi=(\pi_1,\ldots,\pi_s)\) of \(E\), a partial
transversal of \(\pi\) is a set \(S\subseteq E\) containing
at most one element from each part, whereas a transversal contains exactly one
from each part. The partition \(\pi\) is \emph{independent} if every
transversal is independent in \(M\), or equivalently, if every partial
transversal is independent. For any flat \(X\), write
\(\pi_X=\{B\cap X:B\in\pi,\ B\cap X\ne\varnothing\}\) for the
partition induced on \(X\). The partition \(\pi\) is \emph{nice}
if it is independent and satisfies the \emph{singleton condition}
that \(\pi_X\) has a singleton part for every flat \(X\) of positive rank.
In a nice partition, no part contains a line.
For \(e\in E\), let \(\Pi(e)\) denote the part containing \(e\).
For \(S\subseteq E\), write
\(\Pi(S)=\{B\in\pi:B\cap S\ne\varnothing\}\).
We recall the following standard properties of nice partitions
\cite{OrlikTerao1992,Terao1992}.
\begin{proposition}\label{prop:nice-localizes}
 Let \(M\) be the simple matroid of a finite central arrangement and let
 \(\pi\) be a nice partition of \(M\). Write \(r=\rk M\). Then:
 \begin{enumerate}[label=\textup{(\roman*)}]
  \item \(\pi\) has exactly \(r\) parts;
  \item for every flat \(X\), \(\pi_X\) is a nice partition of \(M|X\);
  \item for every flat \(X\), \(|\pi_X|=\rk_M(X)\);
  \item the characteristic polynomial factors as
  \[
   \chi_M(t)=\prod_{B\in\pi}(t-|B|).
  \]
 \end{enumerate}
\end{proposition}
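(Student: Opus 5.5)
I would prove the four items in the order (ii), (i)/(iii), (iv), since localization makes the counting statements follow from the statement for the whole matroid. For (ii), independence of \(\pi_X\) is inherited: a transversal of \(\pi_X\) is a partial transversal of \(\pi\), hence independent in \(M\) and so in \(M|X\). For the singleton condition, the flats of \(M|X\) are exactly the flats \(Y\) of \(M\) with \(Y\subseteq X\), and \((\pi_X)_Y=\pi_Y\). So the singleton condition for \(\pi_X\) is a special case of that for \(\pi\).

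For (i), independence gives one inequality at once: any transversal has \(s\) elements and is independent, so \(s\le r\). For the reverse inequality I will build a spanning partial transversal by induction on rank, using the singleton condition. Let \(X\) be a flat of rank \(k\ge1\). By niceness \(\pi_X\) has a singleton part \(\{e\}\). I will then argue that the elements of \(X\) lying in the other parts of \(\pi_X\) span a flat of rank at least \(k-1\), and apply induction to it. Making this precise is the one nontrivial point. The cleanest route I see is to pick a hyperplane \(Y\subset X\) of \(M|X\) avoiding \(e\) and apply induction to \(\pi_Y\), giving \(k-1\) parts meeting \(Y\). The part \(\Pi(e)\) meets \(X\) only in \(e\notin Y\), so it is a new part, and \(|\pi_X|\ge k\). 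Combined with the independence bound \(|\pi_X|\le\rk X\), applied inside \(M|X\), this proves (iii) for every flat, and (i) is the case \(X=E\).

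For (iv) I would not redo Terao's algebra argument. Instead I would prove the identity by Möbius inversion, counting independent sets. By Whitney's theorem, \(\chi_M(t)=\sum_{S\subseteq E}(-1)^{|S|}t^{r-\rk S}\), and this sum may be restricted to sets containing no broken circuit with respect to any fixed ordering. I will choose an ordering adapted to \(\pi\) and show that the NBC sets are exactly the partial transversals. Then the sum becomes \(\sum_{S}(-1)^{|S|}t^{r-|S|}\) over partial transversals, which equals \(\prod_{B}(t-|B|)\).

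To identify the NBC sets, order the parts \(\pi_1,\dots,\pi_r\) so that the flags of flats \(X_k=\cl(\pi_1\cup\cdots\cup\pi_k)\) satisfy \(\rk X_k=k\). Such an ordering exists by repeatedly using (iii) and the singleton condition on the top flat: the part containing the singleton can be placed last after restricting to a suitable hyperplane. This construction of the adapted ordering is the main obstacle, and it is also where the singleton condition carries the weight. It then remains to check two facts. First, any two elements of one part form a broken circuit: the circuit closing them involves strictly earlier elements, which is forced by the flag structure. Second, partial transversals contain no broken circuit. Alternatively, one can cite Terao's theorem \cite{Terao1992} that a nice partition yields the tensor factorization \(\mathrm{OS}(M)\cong\bigotimes_B(\mathbb K\oplus\mathbb K B)\) of graded vector spaces. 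Taking Poincaré polynomials gives \(\prod_B(1+|B|t)\), and the substitution \(t\mapsto -t^{-1}\) followed by scaling gives (iv). I would present this citation as the proof and keep the NBC argument as a sketch.
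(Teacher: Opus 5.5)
Your arguments for (ii) and for (i)/(iii) are correct and complete. The induction step you flag as the nontrivial point is already finished once you note that a hyperplane \(Y\) of \(M|X\) avoiding \(e\) exists: extend \(\{e\}\) to a basis of \(X\) and close up the basis minus \(e\). The paper proves none of the four items and simply cites Orlik--Terao and Terao, so your self-contained treatment of (i)--(iii) goes beyond it. For (iv), your final choice is Terao's tensor factorization of the Orlik--Solomon algebra, combined with \(\chi_M(t)=t^r\,\mathrm{Poin}(M,-t^{-1})\) and \(|\pi|=r\). That is the same source the paper relies on.

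The NBC sketch, however, is wrong and should not be kept even as a sketch. Suppose the parts are ordered so that \(\rk\cl(\pi_1\cup\cdots\cup\pi_k)=k\) for all \(k\). By (iii) this forces \(X_k=\pi_1\cup\cdots\cup\pi_k\). The singleton condition on lines of \(X_k\) then makes every line of \(X_k\) meet \(X_{k-1}\), which is a modular chain. So such an ordering exists only when \(M\) is supersolvable and \(\pi\) comes from that chain. The nice partition \(\pi_0\) of the non-Fano arrangement in Example~\ref{ex:nonfano-realization-family} has no such ordering, since each of its two parts of size three already has rank three.

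More decisively, no ordering at all makes the NBC sets equal to the partial transversals of \(\pi_0\). The non-Fano matroid has six three-point lines, each containing exactly one same-part pair. These six pairs would have to be exactly the six broken circuits of size two. Hence on every three-point line, the element forming the induced singleton must be the least (or the greatest, with the other convention). The four balanced triangles then require \(H_{13}^{+}<H_{12}^{+}<H_{13}^{-}<H_{12}^{-}<H_{13}^{+}\), which is impossible.

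If you want (iv) without the Orlik--Solomon algebra, keep your idea of counting partial transversals but use the M\"obius recursion instead of an ordering. By independence and (iii), the partial transversals of \(\pi\) whose closure is a given flat \(Y\) are exactly the transversals of \(\pi_Y\). Hence for every flat \(X\), with \(Y\) ranging over the flats contained in \(X\),
\[
 \sum_{Y\subseteq X}(-1)^{\rk Y}\prod_{B\in\pi_Y}|B|=\prod_{B\in\pi_X}(1-|B|).
\]
The right-hand side vanishes when \(\rk X\ge1\), because \(\pi_X\) has a singleton part. This is exactly the recursion defining the M\"obius function, so \(\mu(\hat0,X)=(-1)^{\rk X}\prod_{B\in\pi_X}|B|\) by induction on \(\rk X\). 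Substituting into \(\chi_M(t)=\sum_X\mu(\hat0,X)t^{r-\rk X}\) gives \(\sum_S(-1)^{|S|}t^{r-|S|}\) over all partial transversals \(S\). By (i), this equals \(\prod_{B\in\pi}(t-|B|)\). In this argument the singleton condition genuinely carries the weight, rather than in the construction of an adapted ordering.
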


If \(N\) is a finite simple connected matroid represented by a central
arrangement and \(|E(N)|\ge2\), every nice partition of \(N\) has
exactly one singleton part. Indeed, Crapo's beta-invariant theorem
\cite{Crapo1967} implies \(\chi'_N(1)\ne0\). Since \(\chi_N(1)=0\),
the root \(1\) is simple. By
Proposition~\ref{prop:nice-localizes}\textup{(iv)}, its multiplicity
is the number of singleton parts.

If a simple matroid \(M=M_1\oplus M_2\) has a nice partition \(\pi\),
each part is contained in the ground set of one summand. Indeed, elements
from different summands form a two-point line, so
the singleton condition prevents them from lying in the same
part. Each summand is a restriction to a flat of \(M\), and the definition
of niceness shows that the induced partitions on both summands are nice.

The subarrangement corresponding to a flat of \(M(\A)\) is called a localization of \(\A\). In a signed-graphic arrangement \(\A(\Sigma)\), the elements supported
on a vertex set \(W\subseteq V\) form a flat. Their normals lie in
\(\mathbb K^W=\operatorname{span}\{e_w:w\in W\}\), whereas every
other element has a nonzero coordinate outside \(W\).
The corresponding subarrangement is therefore a localization. Any nice
partition of \(\A(\Sigma)\) induces a nice partition on it, by
Proposition~\ref{prop:nice-localizes}\textup{(ii)}.

\subsection{Inductive factorizations and safe elements}

Let \(\A\) be a finite central arrangement with a partition
\(\pi=(\pi_1,\ldots,\pi_s)\), and let \(H_0\in\pi_1\). Set
\(\A'=\A\setminus\{H_0\}\) and
\(\A''=\A^{H_0}=\{H_0\cap H:H\in\A'\}\), with repeated members
of \(\A''\) identified. Let \(\pi'\) be the partition of \(\A'\)
induced by \(\pi\), with empty parts omitted. The restriction map
\(\rho:\A\setminus\pi_1\longrightarrow\A''\) is given by
\(\rho(H)=H\cap H_0\). When \(\rho\) is bijective, the images
of \(\pi_2,\ldots,\pi_s\) form a partition \(\pi''\) of \(\A''\).
We define \emph{inductively factored pairs} recursively
\cite[Definition~3.8]{HogeRoehrle2016}:
\begin{enumerate}[label=\textup{(\roman*)}]
 \item The empty arrangement with its empty partition is inductively factored.
 \item For a nonempty arrangement \(\A\), the pair \((\A,\pi)\) is
 inductively factored if, after reordering the parts, there is an
 \(H_0\in\pi_1\) such that \(\rho\)
 is bijective and both \((\A',\pi')\) and \((\A'',\pi'')\) are
 inductively factored.
\end{enumerate}
If \((\A,\pi)\) is inductively factored, we call \(\pi\) an
\emph{inductive factorization} of \(\A\). An arrangement is
\emph{inductively factored} if it admits such a partition.
Every inductive factorization is a nice partition
\cite[Theorem~3.5]{HogeRoehrle2016}.
To construct a recursion from a nice partition, we use the following
result of M{\"o}ller--M{\"u}cksch--R{\"o}hrle
\cite[Proposition~3.4]{MollerMuckschRoehrle2024}.

\begin{proposition}
\label{prop:MMR-nice-children}
Let \(\pi\) be a nice partition of \(\A\), and let \(H_0\in\pi_1\).
If \(\rho:\A\setminus\pi_1\to\A''\) is bijective, then the induced
partitions \(\pi'\) and \(\pi''\) are nice.
\end{proposition}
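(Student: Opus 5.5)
The plan is to verify the two conditions, independence and the singleton condition, separately for \(\pi'\) and \(\pi''\). Flats of the deletion \(\A'\) and of the restriction \(\A''\) will be related to flats of \(\A\), and the niceness of \(\pi\) will be applied there, together with Proposition~\ref{prop:nice-localizes}\textup{(ii),(iii)}.

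\emph{Deletion.} Independence of \(\pi'\) is inherited, since every partial transversal of \(\pi'\) is a partial transversal of \(\pi\). For the singleton condition, let \(Y\) be a flat of \(M(\A')\) of positive rank, and let \(X=\cl_{M(\A)}(Y)\). Then \(X=Y\) or \(X=Y\cup\{H_0\}\).
\begin{itemize}
\item If \(H_0\notin X\), then \(\pi'_Y=\pi_X\), which has a singleton part.
\item If \(H_0\in X\), then \(\pi_X\) has a singleton part \(\{e\}\). If \(e\neq H_0\), or if \(e=H_0\) but some other part of \(\pi_X\) is a singleton, we are done.
\end{itemize}
The remaining case is that the only singleton of \(\pi_X\) is \(\{H_0\}=\pi_1\cap X\). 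Here \(\rk Y=\rk X=:k\), and \(Y\) meets only the \(k-1\) parts of \(\pi_X\) other than \(\pi_1\). This is where bijectivity of \(\rho\) must be used, and I expect this case to be the main obstacle.

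For this case I would try to derive a contradiction as follows. Since \(H_0\in\cl(Y)\), there is a circuit \(C\subseteq X\) through \(H_0\). Minimising over such circuits and over flats, one may take \(X\) minimal, so \(X\) is spanned by \(C\). Every element of \(X\setminus\{H_0\}\) lies in a part of \(\pi_X\) other than \(\pi_1\). Because \(\rho\) is injective on these elements, \(X^{H_0}\) has exactly \(|X|-1\) elements in rank \(k-1\).

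Comparing characteristic polynomials via deletion–restriction then gives
\[
\chi_X(t)=\chi_Y(t)-\chi_{X^{H_0}}(t).
\]
Here \(\chi_X(t)=(t-1)\prod_{B}(t-|B\cap X|)\) by Proposition~\ref{prop:nice-localizes}\textup{(iv)}. From this identity I would show that \(\chi_{X^{H_0}}=\prod_B(t-|B\cap X|)\) is forced, so that \(\chi_Y\) vanishes to order greater than \(1\) at \(t=1\), or acquires an extra root. If that fails, the fallback is to show directly that two elements of \(C\setminus\{H_0\}\) in the same part would have equal images under \(\rho\) by a line argument. In either version, the contradiction should come out as a violation of injectivity of \(\rho\), or of the fact (from Crapo's theorem, as recalled above) that a connected \(Y\) has exactly one singleton part.

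\emph{Restriction.} For independence of \(\pi''\), take a transversal \(\rho(T)\) with \(T\) a transversal of \(\pi_2,\dots,\pi_s\). Then \(T\cup\{H_0\}\) is a transversal of \(\pi\), hence independent, so the forms of \(T\) remain independent modulo the form of \(H_0\). Therefore \(\rho(T)\) is independent in \(\A''\).

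For the singleton condition, let \(Z\) be a flat of \(\A''\) of positive rank. Its preimage together with \(H_0\) is a flat \(X\ni H_0\) of \(\A\) with \(\rk X=\rk Z+1\), and \(\rho\) restricts to a bijection from \(X\setminus\pi_1\) onto \(Z\). The goal is a singleton part of \(\pi_X\) other than \(\pi_1\cap X\). If \(\pi_1\cap X\) is not a singleton, the singleton of \(\pi_X\) already lies outside \(\pi_1\), and its image under \(\rho\) gives the singleton of \(\pi''_Z\). Otherwise, I would apply the singleton condition to a suitable subflat of \(X\) not containing \(H_0\) (for instance the closure of a transversal of the parts other than \(\pi_1\)), combined with the deletion argument above. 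This should produce a singleton part of \(\pi_X\setminus\pi_1\), which \(\rho\) carries to a singleton of \(\pi''_Z\).
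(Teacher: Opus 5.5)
You prove independence of \(\pi'\) and \(\pi''\) correctly, and your case analysis correctly reduces the singleton condition for \(\pi'\) to one critical configuration. That configuration is a flat \(X\ni H_0\) such that \(\pi_1\cap X=\{H_0\}\) is the only singleton part of \(\pi_X\), while \(H_0\in\cl(X\setminus\{H_0\})\). The singleton condition for \(\pi''\) reduces to the same configuration. If \(H_0\) is a coloop of \(\A_X\), the flat \(X\setminus\{H_0\}\) supplies a singleton outside \(\pi_1\). So your phrase ``combined with the deletion argument above'' just points back to the open case.

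This case is never settled, and neither suggested route works. Your claim that \(\chi_{X^{H_0}}=\prod_B(t-|B\cap X|)\) is ``forced'' by \(\chi_X=\chi_Y-\chi_{X^{H_0}}\) is unsupported. It is what niceness of \(\pi''\) on the flat of \(\A''\) corresponding to \(X\) would give, so it presupposes the restriction half and the argument is circular. The same goes for invoking Crapo's one-singleton property for \(Y\), which presupposes that \(\pi'_Y\) is nice. Your ``line argument'' needs a three-point line through \(H_0\) inside \(X\). But all other elements of \(X\) lie outside \(\pi_1\), and independence puts the other two points of such a line into one part. Injectivity of \(\rho\) therefore says precisely that no such line exists. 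What you would need is that bijectivity of \(\rho\) rules out the critical configuration, and this is not a formal consequence of niceness.

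In fact the critical configuration occurs under the stated hypotheses, in characteristic \(3\). Take \(\mathbb K\supseteq\mathbb F_9\) infinite, for example an algebraic closure of \(\mathbb F_3\), and identify hyperplanes of \(\mathbb K^3\) with points of the dual projective plane.
\begin{enumerate}[label=\textup{(\arabic*)}]
\item Let \(P\) be the \(13\) points of \(\mathbb P^2(\mathbb F_3)\). Every \(\mathbb F_9\)-line meets \(P\) in \(1\) or \(4\) points, and every other \(\mathbb F_9\)-point lies on exactly one \(\mathbb F_3\)-line.
\item Choose one further \(\mathbb F_9\)-point on each of the \(13\) lines of \(\mathbb P^2(\mathbb F_3)\); these form \(Q\). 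Let \(e\) avoid every line joining two points of \(P\cup Q\).
\item Every line joining two points of \(P\cup Q\) contains points of both \(P\) and \(Q\), and exactly one point of one of them. Hence \((\{e\},P,Q)\) is a nice partition of these \(27\) hyperplanes, with \(\chi(t)=(t-1)(t-13)^2\).
\item Since \(e\) lies on no three-point line, \(\rho\) is bijective for \(H_0=e\).
\end{enumerate}
Yet \(\pi'=(P,Q)\) has two parts in rank three. Also \(\pi''\) consists of two parts of size \(13\) on a rank-two arrangement. So neither is nice.

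The paper gives no proof of this proposition; it only cites M\"oller--M\"ucksch--R\"ohrle. A proof along your lines would need input beyond niceness and bijectivity of \(\rho\) over an arbitrary field, such as restrictions on the field or on the class of arrangements. For example, over \(\mathbb R\) in rank three the Motzkin--Rabin theorem excludes the configuration above.
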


To determine when the restriction map is bijective, we describe it in
the matroid \(M=M(\A)\). Write \(\operatorname{si}(N)\) for the
simplification of a matroid \(N\), obtained by deleting loops and
identifying parallel elements. If \(e\) corresponds to \(H_0\), then
\begin{equation}\label{eq:restriction-contraction}
 M(\A^{H_0})\cong\operatorname{si}(M/e).
\end{equation}
Restricting the defining forms to \(H_0\) amounts to taking their images
modulo the one-dimensional span of a defining form for \(H_0\).
Equal restricted hyperplanes correspond to parallel elements in \(M/e\).
Under this identification, \(\rho\) sends each element outside the part
\(B\) containing \(e\) to its parallel class in \(M/e\).
The same recursive definition therefore applies to finite simple
matroids, using deletion \(M\setminus e\) and simple contraction
\(\operatorname{si}(M/e)\). The deletion partition is obtained by removing
\(e\) from \(B\), omitting an empty part. When \(\rho\) is bijective,
the images of the parts other than \(B\) form the contraction partition.
Both niceness and inductive factorization are preserved by reordering
parts and by matroid isomorphisms, hence also by switching.

\begin{definition}\label{def:safe-element}
Let \(M\) be a simple matroid with a nice partition \(\pi\), and let
\(e\in E(M)\). A circuit \(\{e,x,y\}\) with
\(\Pi(x)=\Pi(y)\ne\Pi(e)\) is a \emph{bad three-point circuit through
\(e\)}. The element \(e\) is \emph{\(\pi\)-safe} if there is no such circuit.
\end{definition}

\begin{proposition}
\label{prop:three-circuit-deletion-bridge}
Let \(M=M(\A)\) be the simple matroid of a finite central arrangement,
let \(\pi\) be a nice partition of \(M\), and let \(e\in B\in\pi\).
The map \(\rho:E(M)\setminus B\longrightarrow E(\operatorname{si}(M/e))\)
sending each element to its parallel class in \(M/e\) is bijective
if and only if \(e\) is \(\pi\)-safe. In this case, the induced
deletion and contraction partitions are nice.
\end{proposition}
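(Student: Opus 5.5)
We must show two things: the map \(\rho\) is bijective exactly when \(e\) is \(\pi\)-safe, and in that case the induced partitions are nice. The second part follows at once from Proposition~\ref{prop:MMR-nice-children}, once we translate through \eqref{eq:restriction-contraction}. Under that identification, \(\rho:E(M)\setminus B\to E(\operatorname{si}(M/e))\) is precisely the restriction map \(\A\setminus\pi_1\to\A^{H_0}\) after reordering the parts so that \(B=\pi_1\). So the real content is the bijectivity criterion, which I would prove in two steps: surjectivity, which always holds, and injectivity, which is equivalent to safety.

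For surjectivity, note that \(M\) is simple, so no element other than \(e\) is a loop of \(M/e\). Every parallel class of \(M/e\) is therefore the class of some \(f\ne e\). If \(f\notin B\), the class is already hit. Suppose instead \(f\in B\). Take the line \(X=\cl_M\{e,f\}\). By Proposition~\ref{prop:nice-localizes}(iii), \(\pi_X\) has exactly two parts. Since \(e,f\in B\cap X\), one of these parts contains both, so the other part must come from a different block \(B'\ne B\). Choose \(g\in B'\cap X\). Then \(g\ne e\) and \(g\in\cl\{e,f\}\), and \(\{e,f,g\}\) is a circuit because the matroid is simple and the three elements lie on a line. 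Hence \(g\) is parallel to \(f\) in \(M/e\), and \(\rho(g)\) is the class of \(f\).

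For injectivity, observe that two elements \(x\ne y\) outside \(B\) are parallel in \(M/e\) exactly when \(\rk\{e,x,y\}=2\). Since \(M\) is simple, this happens exactly when \(\{e,x,y\}\) is a circuit. Suppose \(\Pi(x)\ne\Pi(y)\). Then \(\{e,x,y\}\) has one element from each of three distinct parts, so it is a partial transversal. Independence of \(\pi\) then makes it independent, which rules out the circuit. So a failure of injectivity can only come from a circuit \(\{e,x,y\}\) with \(\Pi(x)=\Pi(y)\ne B=\Pi(e)\), which is a bad three-point circuit through \(e\). Conversely, any bad circuit gives two distinct elements outside \(B\) with the same image. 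Hence \(\rho\) is injective if and only if \(e\) is \(\pi\)-safe.

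The only delicate point is surjectivity. There the singleton/rank count on the line \(X\) is needed to guarantee an element of \(X\) outside \(B\). The rest is bookkeeping with the identification \eqref{eq:restriction-contraction}, and with the convention that the parts may be reordered so that the chosen part is \(\pi_1\) when invoking Proposition~\ref{prop:MMR-nice-children}.
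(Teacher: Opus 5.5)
Your proposal is correct and follows the paper's proof essentially step for step. Surjectivity comes from an element of the line \(\cl_M\{e,f\}\) outside \(B\), injectivity from independence of the partial transversal \(\{e,x,y\}\), and niceness from Proposition~\ref{prop:MMR-nice-children} via \eqref{eq:restriction-contraction}; the only cosmetic difference is that you obtain \(g\) from Proposition~\ref{prop:nice-localizes}\textup{(iii)} rather than directly from the singleton condition.
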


\begin{proof}
Let a parallel class of \(M/e\) be represented by \(f\ne e\).
If \(f\notin B\), this class is in the image of \(\rho\). Otherwise,
the line \(\cl_M\{e,f\}\) contains two elements of \(B\), so the
singleton condition gives an element \(g\) on this line outside \(B\).
Simplicity implies that \(\{e,f,g\}\) is a circuit, so \(f\) and \(g\)
are parallel in \(M/e\). Thus \(\rho\) is surjective. For distinct
\(x,y\notin B\), their images coincide exactly when \(\{e,x,y\}\)
is a circuit. Independence of \(\pi\) excludes this when \(x\) and \(y\)
lie in different parts. Hence \(\rho\) fails to be injective exactly
when there is a bad three-point circuit through \(e\). When \(\rho\)
is bijective, \eqref{eq:restriction-contraction} identifies it with the
arrangement restriction map, and Proposition~\ref{prop:MMR-nice-children}
gives the asserted niceness of both induced partitions.
\end{proof}

\section{Structure of parts and anchored routes}
\label{sec:local-structure}

Throughout this section, let \(\Sigma=(G^+,G^-,L)\) be a finite signed graph
on \(V\), write \(\A=\A(\Sigma)\), and fix a nice partition \(\pi\)
of \(\A\). The \emph{coordinate core} of a part \(B\in\pi\) is
\(K(B)=\bigcap_{H\in B}\operatorname{supp}(H)\), the set of vertices
common to the supports of all its elements.

\begin{lemma}
\label{lem:coordinate-core-cover}
If \(\rk\A=|V|\), then
\(\bigcup_{B\in\pi}K(B)=V\).
\end{lemma}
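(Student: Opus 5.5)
The plan is to fix a vertex \(v\in V\) and compare the number of parts of \(\pi\) with the rank of a suitable flat. Concretely, let \(X\) be the set of elements of \(\A\) supported on \(W=V\setminus\{v\}\), that is, the elements whose support does not contain \(v\). As recalled at the end of Subsection~\ref{subsec:nice-partitions}, \(X\) is a flat of \(M(\A)\): its normals span a subspace of \(\mathbb K^{W}\), and every other element has a nonzero \(v\)-coordinate. It follows that \(\rk_M(X)\le\dim\mathbb K^{W}=|V|-1\). This bound also covers the case \(X=\varnothing\).

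Next I will count parts. By Proposition~\ref{prop:nice-localizes}\textup{(i)}, \(\pi\) has exactly \(\rk\A=|V|\) parts, using the hypothesis. By Proposition~\ref{prop:nice-localizes}\textup{(iii)}, the induced partition \(\pi_X\) has exactly \(\rk_M(X)\le|V|-1\) parts. If \(X\) is empty, then \(\pi_X\) is empty and the count is trivially \(0\). The parts of \(\pi_X\) correspond bijectively to the parts \(B\in\pi\) with \(B\cap X\ne\varnothing\). Since there are strictly fewer of these than parts of \(\pi\), some part \(B\in\pi\) satisfies \(B\cap X=\varnothing\).

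Every element of this part \(B\) therefore lies outside \(X\), so its support contains \(v\). Parts of a partition are nonempty, so \(v\in\bigcap_{H\in B}\operatorname{supp}(H)=K(B)\). Since \(v\) was arbitrary, \(\bigcup_{B\in\pi}K(B)=V\).

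There is no real obstacle in this argument. The only points needing care are two. First, \(X\) must genuinely be a flat, which is the localization remark already established. Second, Proposition~\ref{prop:nice-localizes}\textup{(iii)} gives equality \(|\pi_X|=\rk_M(X)\); in fact only the inequality \(|\pi_X|\le\rk_M(X)\) is needed, and that already follows from independence of \(\pi\).
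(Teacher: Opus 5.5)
Your proof is correct and is essentially the paper's argument. The paper assumes \(v\) lies in no core and picks, from each of the \(|V|\) parts, an element whose support avoids \(v\); this gives \(|V|\) independent forms in the \((|V|-1)\)-dimensional space of forms without \(x_v\). Your count \(|\pi_X|\le\rk_M(X)\le|V|-1<|\pi|\) is the same independence-plus-dimension argument, phrased through the flat \(X\).
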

\begin{proof}
We have \(|\pi|=|V|\) by Proposition~\ref{prop:nice-localizes}\textup{(i)}.
Suppose that some \(v\in V\) lies in no core. For each \(B\in\pi\),
choose \(H_B\in B\) with \(v\notin\operatorname{supp}(H_B)\).
The set \(\{H_B:B\in\pi\}\) is a transversal, so its \(|V|\)
defining forms are linearly independent. But they all lie in the
\((|V|-1)\)-dimensional space of linear forms with zero coefficient
at \(x_v\), a contradiction.
\end{proof}

Adjoin an auxiliary vertex \(0\notin V\). The \emph{augmented support}
of \(h_i\) is \(\{0,i\}\), and that of \(H_{ij}^{\sigma}\) is
\(\{i,j\}\). A part is an \emph{augmented star} if its augmented
supports have a common vertex. Each such vertex is called a
\emph{centre} of the part. A \emph{\(B_2\)-part} is a part of the
form \(\{h_i,h_j,H_{ij}^{\sigma}\}\). A \emph{triangle part} is a
part consisting of the three links of an unbalanced triangle.

\begin{lemma}
\label{lem:global-support-trichotomy}
Every part of \(\pi\) is an
augmented star, a \(B_2\)-part, or a triangle part. For a \(B_2\)-part,
the opposite of its link is present outside that part. For a triangle part,
all three opposite links are present outside that part.
\end{lemma}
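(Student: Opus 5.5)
The plan is to work with augmented supports and use only the independence of \(\pi\) together with the singleton condition on suitable flats. Fix a part \(B\), and regard each element as an edge of the augmented graph on \(V\cup\{0\}\): the coordinate element \(h_i\) becomes the edge \(0i\), and the link \(H_{ij}^{\sigma}\) becomes the edge \(ij\). The edges are labelled by signs, and a positive and a negative link on \(ij\) both map to the same augmented edge \(ij\).

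First we need that \(B\) contains no parallel augmented edges. Two elements of \(B\) are never \(H_{ij}^{+},H_{ij}^{-}\), since together with \(h_i\) or on their own they span a line. More to the point, every line of \(M(\A)\) meets \(B\) in at most one element, because no part contains a line and two elements of one part already span a line. Hence \(B\) is a set of augmented edges, which we view as a simple graph \(\Gamma_B\).

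Next we need that \(\Gamma_B\) contains no two disjoint edges, since that is exactly the failure of a graph to be a star or a triangle. Suppose \(e,f\in B\) have disjoint augmented supports. Let \(X=\cl\{e,f\}\), a line. If \(e\) and \(f\) have disjoint ordinary supports and are not both coordinate elements, then \(X=\{e,f\}\). In that case \(\pi_X\) has the single part \(\{e,f\}\), which violates the singleton condition. Disjoint augmented supports also rule out the case where both are coordinate elements (both would contain \(0\)). The remaining case is two links \(H_{ij}^{\alpha},H_{kl}^{\beta}\) on four distinct vertices, and their line is just the two of them, a contradiction. So any two edges of \(\Gamma_B\) intersect.

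A graph with pairwise intersecting edges is either a star, which is an augmented star, or a triangle. If it is a triangle containing the vertex \(0\), its edges are \(h_i,h_j,H_{ij}^{\sigma}\), so \(B\) is a \(B_2\)-part. Otherwise the triangle is on three vertices of \(V\) and consists of links \(H_{ij}^{\alpha},H_{jk}^{\beta},H_{ik}^{\gamma}\). If this triangle were balanced, it would be a circuit by \eqref{eq:signed-frame-three-circuits} and hence dependent, contradicting independence. So it is unbalanced, and \(B\) is a triangle part.

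It remains to prove the claims about opposite links. Take a \(B_2\)-part \(\{h_i,h_j,H_{ij}^{\sigma}\}\) and let \(X\) be the flat of elements supported on \(\{i,j\}\). This flat is \(\{h_i,h_j,H_{ij}^{\sigma}\}\), together with \(H_{ij}^{-\sigma}\) if present. If \(H_{ij}^{-\sigma}\) were absent, \(\pi_X\) would consist of one part of size three, which is not a singleton. Hence \(H_{ij}^{-\sigma}\) is present, and it lies outside the part.

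For a triangle part on \(\{i,j,k\}\), localize to the flat \(X\) of elements supported on \(\{i,j,k\}\). This flat has rank at most \(3\), and in fact rank exactly \(3\), since the unbalanced triangle has rank \(3\). By Proposition~\ref{prop:nice-localizes}(ii),(iii), \(\pi_X\) is nice with three parts, one of which is the triangle part \(T\).

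Suppose some opposite link, say \(\bar e\) for \(e=H_{ij}^{\alpha}\), is absent. Consider the line \(Y=\cl\{H_{jk}^{\beta},H_{ik}^{\gamma}\}\). Its elements are the links on \(jk\) and \(ik\), the coordinate element \(h_k\) if present, and a link on \(ij\) that makes a balanced triangle with \(H_{jk}^{\beta}\) and \(H_{ik}^{\gamma}\). That balanced link is \(H_{ij}^{\beta\gamma}\), which equals \(\bar e\) because the triangle \(T\) is unbalanced. So if \(\bar e\) is absent, \(Y\) contains two elements of \(T\) and otherwise only elements whose presence we must control.

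This is where I expect the main obstacle. The line \(Y\) must contain an element outside \(T\), and the candidates are \(h_k\), \(\bar e\), and the other links on \(jk\) and \(ik\). I would first try the case analysis on these small lines directly. If that is not enough, I would apply the singleton condition to the rank-\(2\) flats and to \(X\) itself, combined with independence of the remaining two parts: every transversal must be independent, which forbids any element outside \(T\) from forming a balanced triangle with two elements taken from different parts. From these constraints I would derive a contradiction by running through the finitely many configurations on three vertices.
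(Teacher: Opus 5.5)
Your first reduction is correct and is the paper's: elements with disjoint augmented supports span a two-point line, so the distinct augmented supports in \(B\) pairwise intersect, and they form a star or a triangle. Your argument for the opposite link of a \(B_2\)-part is also fine. Several later steps, however, rest on false claims, and one step is missing.

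\textbf{False claims in the structural part.}
\begin{enumerate}[label=(\alph*)]
 \item The claim that every line meets \(B\) in at most one element is false. Any two elements of a part span a line meeting that part twice, so the claim would force \(|B|\le1\). ``No part contains a line'' only says that no line lies entirely inside one part.
 \item The claim that a part never contains both \(H_{ij}^{+}\) and \(H_{ij}^{-}\) is also false. For instance, \((\{h_1\},\{H_{12}^{+},H_{12}^{-}\})\) is a nice partition of \(\{h_1,H_{12}^{+},H_{12}^{-}\}\). This is harmless for stars, but you used it to pin both triangle cases down to exactly three elements. For the \(B_2\) case the correct reason is short: \(h_i,h_j,H_{ij}^{\pm}\) all lie on one line, and that line cannot be contained in \(B\).
 \item You exclude a balanced triangle by independence of \(\pi\). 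Independence constrains partial transversals, not subsets of a single part; a \(B_2\)-part is itself a circuit inside one part. The relevant condition is the singleton condition.
\end{enumerate}

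\textbf{The real gap: the triangle-part claim.} You leave it as an unresolved case analysis, and the obstacle you anticipate comes from miscomputing \(Y=\cl\{H_{jk}^{\beta},H_{ik}^{\gamma}\}\).
\begin{itemize}
 \item By \eqref{eq:signed-frame-three-circuits}, a three-point circuit containing links on two different pairs must be a balanced triangle. So \(Y\) contains neither \(h_k\) nor the opposite links on \(jk\) and \(ik\). Directly: a nonzero combination \(a(x_j-\beta x_k)+b(x_i-\gamma x_k)\) supported in \(\{j,k\}\) forces \(b=0\), so it is a multiple of \(x_j-\beta x_k\). The case \(\{i,k\}\) is symmetric, and support \(\{k\}\) would need \(a=b=0\).
 \item Hence \(Y\subseteq\{H_{jk}^{\beta},H_{ik}^{\gamma},H_{ij}^{\beta\gamma}\}\). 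The singleton condition on \(Y\) then forces \(H_{ij}^{\beta\gamma}\) to be present and outside \(B\).
 \item Since \(B\) contains some link \(H_{ij}^{\alpha}\), this gives \(\alpha=-\beta\gamma\). So the chosen triangle is unbalanced, the opposite of \(H_{ij}^{\alpha}\) is present outside \(B\), and \(B\) has only one link on \(ij\).
 \item Running this over the three pairs of sides gives unbalancedness, exactly one link per side, and all three opposite links at once.
\end{itemize}
This is precisely the paper's argument. No localization to the rank-\(3\) flat and no enumeration of configurations on three vertices is needed. As written, your proposal does not establish the triangle-part statement.
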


\begin{proof}
Let \(B\in\pi\). By the circuit list~\eqref{eq:signed-frame-three-circuits},
two elements with disjoint augmented supports cannot belong to a common
three-point circuit. They therefore form a two-point line, and the
singleton condition prevents them from both lying in \(B\).
Thus the distinct augmented supports in \(B\) form a pairwise
intersecting family of two-element sets. Such a family either has a
common vertex or consists of the three sides of a triangle.
In the first case, \(B\) is an augmented star.

Suppose that the augmented supports form a triangle containing \(0\),
say \(0ij\). Then \(B\) contains \(h_i,h_j,H_{ij}^{\sigma}\).
The only other element that can lie on their line is
\(H_{ij}^{-\sigma}\). The singleton condition forces this opposite link
to be present outside \(B\). Every element of \(B\) has augmented
support \(0i\), \(0j\), or \(ij\), so \(B\) consists of exactly the
three stated elements and is a \(B_2\)-part.

If the triangle does not contain \(0\), write it as \(ijk\) and choose
one link of \(B\) on each side. The line spanned by two chosen links
must contain an element outside \(B\), by the singleton condition.
By~\eqref{eq:signed-frame-three-circuits}, its only possible third
element is the link completing a balanced triangle. This link must
be opposite to the chosen link on the third side, so the chosen
triangle is unbalanced. Applying the same argument to the other two pairs gives
all three opposite links outside \(B\). Since the augmented supports
of \(B\) are \(ij,jk,ik\), it consists of exactly the three chosen
links and is a triangle part.
\end{proof}

\begin{definition}\label{def:anchored-route}
Let \(u\in V\setminus L\), and let \(q_0\) be a link incident with \(u\). An \emph{anchored route
at \(u\) starting from \(q_0\)} consists of \(q_0\) alone if \(q_0\)
is safe. Otherwise, it is a sequence of links constructed by the
following steps.
\begin{enumerate}[label=\textup{(\roman*)}]
 \item Given the current unsafe link \(q_i\), choose any bad three-point
 circuit through it. Since \(h_u\) is absent,
 \eqref{eq:signed-frame-three-circuits} allows us to write this circuit
 as \(\{q_i,q_{i+1},f_{i+1}\}\), where \(q_{i+1}\) is the unique
 other link incident with \(u\). Set
 \(B_{i+1}=\Pi(q_{i+1})=\Pi(f_{i+1})\).
 Since the circuit is bad, \(B_{i+1}\ne\Pi(q_i)\).
 \item Stop at \(q_{i+1}\) if it is safe or if \(B_{i+1}\) is a
 triangle part. Otherwise, repeat \textup{(i)} with \(q_{i+1}\) as the
 current link. We call \(B_1,B_2,\ldots\) the \emph{entered parts}
 of the route.
\end{enumerate}
\end{definition}

For an anchored route, write \(q_i=H_{uv_i}^{\varepsilon_i}\) and choose
the defining form \(\alpha_i=x_u-\varepsilon_i x_{v_i}\).
Each \(\alpha_i\) has coefficient \(1\) at \(x_u\), whereas every
defining form of \(f_{i+1}\) has coefficient \(0\) there.
In a linear dependence among these three forms, the coefficients of
\(\alpha_i\) and \(\alpha_{i+1}\) are therefore opposite. Hence
\(\beta_{i+1}:=\alpha_{i+1}-\alpha_i\) is a nonzero scalar multiple
of a defining form of \(f_{i+1}\).

\begin{samepage}
\begin{lemma}
\label{lem:anchored-route-no-return}
Let \(q_r,\ldots,q_s\), with \(r<s\), be a segment of an anchored route.
If the entered parts \(B_{r+1},\ldots,B_s\) are pairwise distinct, then:
\begin{enumerate}[label=\textup{(\roman*)}]
 \item \(q_s\ne q_r\);
 \item if \(H=\ker(\alpha_s-\alpha_r)\) belongs to \(\A\), then
 \(\Pi(H)\in\{B_{r+1},\ldots,B_s\}\).
\end{enumerate}
\end{lemma}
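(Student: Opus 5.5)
The plan is to telescope the defining forms along the segment and then use independence of \(\pi\). With \(\alpha_i\) and \(\beta_{i+1}=\alpha_{i+1}-\alpha_i\) as set up just before the lemma, we have
\[
\alpha_s-\alpha_r=\sum_{k=r+1}^{s}\beta_k .
\]
Each \(\beta_k\) is a nonzero scalar multiple of a defining form of \(f_k\), and \(f_k\in B_k\) by the construction in Definition~\ref{def:anchored-route}. Since the parts \(B_{r+1},\ldots,B_s\) are pairwise distinct, the elements \(f_{r+1},\ldots,f_s\) are pairwise distinct. Moreover, the set \(\{f_{r+1},\ldots,f_s\}\) is a partial transversal of \(\pi\), so it is independent. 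Consequently the forms \(\beta_{r+1},\ldots,\beta_s\) are linearly independent. In particular, their sum is a nonzero combination of independent vectors and is therefore nonzero.

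For (i), suppose \(q_s=q_r\). Then \(\alpha_s=\alpha_r\), because both forms are normalized to have coefficient \(1\) at \(x_u\). This would make \(\sum_{k=r+1}^s\beta_k=0\), contradicting the previous paragraph. So \(q_s\ne q_r\).

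For (ii), set \(\gamma=\alpha_s-\alpha_r\), which is nonzero by the first paragraph. Suppose \(H=\ker\gamma\in\A\) and \(\Pi(H)\notin\{B_{r+1},\ldots,B_s\}\). Then \(H\) differs from every \(f_k\), since \(\Pi(f_k)=B_k\). Hence \(\{H,f_{r+1},\ldots,f_s\}\) contains one element from each of \(s-r+1\) distinct parts, so it is a partial transversal of \(\pi\). Its defining forms are dependent, however: \(\gamma\) is a linear combination of the \(\beta_k\), and each \(\beta_k\) is proportional to a defining form of \(f_k\). This contradicts the independence of \(\pi\). Therefore \(\Pi(H)\in\{B_{r+1},\ldots,B_s\}\).

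There is no real obstacle here. The only points needing care are that each \(\beta_k\) is nonzero (which was shown before the lemma) and that distinct entered parts make the \(f_k\) form a genuine partial transversal.
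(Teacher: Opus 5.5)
Your proposal is correct and follows the paper's proof: telescope \(\alpha_s-\alpha_r=\beta_{r+1}+\cdots+\beta_s\), use that \(\{f_{r+1},\ldots,f_s\}\) is a partial transversal so the \(\beta_k\) are independent and the sum is nonzero, and then read off (i) and (ii) from independence of \(\pi\). Your version of (i) is the contrapositive of the paper's non-proportionality remark, and your explicit check that \(H\) differs from each \(f_k\) is a harmless extra detail.
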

\end{samepage}

\begin{proof}
The elements \(f_{r+1},\ldots,f_s\) belong to distinct parts of the
independent partition \(\pi\), so their defining forms
\(\beta_{r+1},\ldots,\beta_s\) are linearly independent.
Summing the differences along the segment gives
\[
 \alpha_s-\alpha_r=\beta_{r+1}+\cdots+\beta_s.
\]
By linear independence, this nonempty sum is nonzero.
The distinct forms \(\alpha_r,\alpha_s\) both have coefficient \(1\)
at \(x_u\), so they are not proportional. Thus \(q_s\ne q_r\), proving
\textup{(i)}. If \(H=\ker(\alpha_s-\alpha_r)\) belongs to \(\A\)
and \(\Pi(H)\notin\{B_{r+1},\ldots,B_s\}\), the same identity gives
a nontrivial linear dependence among the defining forms of the partial
transversal \(\{f_{r+1},\ldots,f_s,H\}\), contrary to independence
of \(\pi\).
\end{proof}

\begin{lemma}
\label{lem:terminal-bad-route-hits-real-D3}
Every anchored route stops after finitely many steps, at a safe link or
in a triangle part. Its entered parts are pairwise distinct and are all
augmented stars, except for a possible final triangle part.
\end{lemma}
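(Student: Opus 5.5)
The proof will proceed by induction along the route, and its main tool is Lemma~\ref{lem:anchored-route-no-return}. The first thing to establish is the shape of each entered part that is not the final one. Suppose the route continues past \(q_{i+1}\). Then \(B_{i+1}\) is not a triangle part, by the stopping rule. It is also not a \(B_2\)-part: every element of a \(B_2\)-part \(\{h_a,h_b,H_{ab}^{\sigma}\}\) is supported on \(\{a,b\}\), whereas \(B_{i+1}\) contains both \(q_{i+1}\) (supported on \(\{u,v_{i+1}\}\)) and \(f_{i+1}\), which has coefficient \(0\) at \(x_u\). Those two would force \(B_{i+1}\) to contain \(h_u\), yet \(u\notin L\), so \(h_u\) is absent. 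By Lemma~\ref{lem:global-support-trichotomy}, \(B_{i+1}\) is therefore an augmented star. The same argument applies to the final entered part unless it is a triangle part, and this gives the structural half of the statement.

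Next comes distinctness of the entered parts. I will argue by induction on the length of the constructed segment, assuming \(B_1,\ldots,B_s\) are pairwise distinct and that the route continues, so \(q_s\) is unsafe. The aim is to show \(B_{s+1}\notin\{B_1,\ldots,B_s\}\). Suppose instead that \(B_{s+1}=B_r\) with \(r\le s\) chosen as large as possible, and consider the segment \(q_{r-1},\ldots,q_s\).

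The key relation is \(\beta_{s+1}=\alpha_{s+1}-\alpha_s\) up to a scalar, where \(\beta_{s+1}\) is a defining form of \(f_{s+1}\in B_r\). We also have \(f_r\in B_r\) with \(\alpha_r-\alpha_{r-1}\) proportional to a defining form of \(f_r\). Since \(q_r,q_{s+1}\in B_r\) are both links at \(u\), and \(B_r\) is an augmented star, I expect the centre to be forced. Because \(q_r\) and \(f_r\) share a vertex while \(f_r\) avoids \(u\), the centre must be \(v_r\), and similarly \(v_{s+1}\). Hence \(v_{s+1}=v_r\).

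With this in hand, \(q_{s+1}\) is a link on \(uv_r\): it is either \(q_r\) or its opposite \(\bar q_r\). I will then apply Lemma~\ref{lem:anchored-route-no-return} to the segment \(q_r,\ldots,q_{s+1}\), whose entered parts \(B_{r+1},\ldots,B_{s+1}=B_r\) are distinct by the maximality of \(r\).

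If \(q_{s+1}=q_r\), part (i) of that lemma gives a contradiction immediately. If \(q_{s+1}=\bar q_r\), then \(\alpha_{s+1}-\alpha_r=\pm2x_{v_r}\). When \(h_{v_r}\in\A\), part (ii) places \(\Pi(h_{v_r})\) among \(B_{r+1},\ldots,B_{s+1}\). I also need to rule out \(h_{v_r}\) lying in the same part as \(q_r\) and \(\bar q_r\), since \(\{h_{v_r},q_r,\bar q_r\}\) is a line.

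When \(h_{v_r}\notin\A\), I will instead use independence directly. The forms \(\beta_{r+1},\ldots,\beta_{s+1}\) come from a partial transversal and sum to \(\pm2x_{v_r}\). I will combine this with the circuit structure, namely that \(\{q_r,\bar q_r\}\subseteq B_r\) together with the singleton condition on the line \(\{q_r,\bar q_r,\ldots\}\), which has no third element when \(h_{v_r}\) is absent. The singleton condition fails on that two-point line, so a contradiction follows. When \(h_{v_r}\) is present, the singleton condition on the line \(\{h_{v_r},q_r,\bar q_r\}\) forces \(h_{v_r}\) to lie outside \(B_r\). Part (ii) then puts \(h_{v_r}\) in some \(B_j\) with \(r<j\le s\), and I would then seek a contradiction from augmented-star centres or from independence.

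Finiteness follows at once: the entered parts are pairwise distinct and \(\pi\) is finite, so the route must stop, and by construction it stops only at a safe link or in a triangle part.

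The step I expect to be the main obstacle is the collision case \(q_{s+1}=\bar q_r\) with \(h_{v_r}\in\A\), where the singleton and independence arguments have to be combined carefully. My first attempt will be the transversal \(\{f_{r+1},\ldots,f_s,h_{v_r}\}\) together with the identity \(\sum\beta=\pm2x_{v_r}\), choosing the segment boundaries so that \(h_{v_r}\)'s part is excluded from the sum.
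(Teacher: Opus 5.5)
Your reduction matches the paper's up to the last case: you exclude $B_2$-parts via the absent $h_u$, identify $v_{i+1}$ as the centre of an entered augmented star, and set up a first repetition $B_{s+1}=B_r$ with $B_{r+1},\ldots,B_{s+1}$ pairwise distinct. You then use Lemma~\ref{lem:anchored-route-no-return}\textup{(i)} when $q_{s+1}=q_r$, and get the two-point-line contradiction when $h_{v_r}$ is absent. The proof is not finished, however, because it stops at the decisive case. Once you have $h_{v_r}\notin B_r$ and $\Pi(h_{v_r})=B_j$ with $r<j\le s$, you only say you would ``seek a contradiction''. The concrete plan you give does not work for two reasons:
\begin{enumerate}[label=\textup{(\alph*)}]
 \item The set $\{f_{r+1},\ldots,f_s,h_{v_r}\}$ is not a partial transversal, since $f_j$ and $h_{v_r}$ both lie in $B_j$.
 \item The identity $\beta_{r+1}+\cdots+\beta_{s+1}=\pm2x_{v_r}$ involves $\beta_{s+1}$, so it yields no dependence among that set.
\end{enumerate}
Shifting the segment boundaries to exclude $B_j$ does not help either. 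For the segment $q_j,\ldots,q_{s+1}$ you would need $\ker(\alpha_{s+1}-\alpha_j)=h_{v_r}$. That already presupposes that $q_j$ lies on $uv_r$, which is exactly what remains to be shown.

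The missing idea is the centre argument you already used for $B_r$, now applied to $B_j$. Since $j\le s$, the route continued past $q_j$, so $B_j$ is an augmented star; alternatively, a triangle part contains no coordinate element. The part $B_j$ contains $h_{v_r}$, with augmented support $\{0,v_r\}$, and the link $q_j$, with support $\{u,v_j\}$. Since $0\notin\{u,v_j\}$ and $v_r\ne u$, their only possible common vertex is $v_r=v_j$. Hence $q_j$ is one of the two links on $uv_r$. Both of these, $q_r$ and $\bar q_r=q_{s+1}$, lie in $B_r$, which contradicts $q_j\in B_j\ne B_r$. With this step added, your argument coincides with the paper's proof.
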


\begin{proof}
A \(B_2\)-part containing \(q_{i+1}\) would also contain \(h_u\),
which is absent. By Lemma~\ref{lem:global-support-trichotomy}, each
entered part \(B_{i+1}\) is therefore an augmented star or a
triangle part at which the route stops. If it is an augmented star, its centre
is \(v_{i+1}\), the unique common endpoint of the augmented supports
of \(q_{i+1}\) and \(f_{i+1}\).

Suppose that an augmented star is entered twice, and let \(B_s=B_r\),
with \(1\le r<s\), be the first repetition. Then the parts
\(B_{r+1},\ldots,B_s\) are pairwise distinct. The unique centre of
\(B_r=B_s\) gives \(v_s=v_r=v\). Thus \(q_s\) and \(q_r\) are either
the same link or the two opposite links on \(uv\).
Lemma~\ref{lem:anchored-route-no-return}\textup{(i)} excludes
\(q_s=q_r\).
The links \(q_r,q_s\) are therefore opposite. Since \(h_u\) is absent,
the only other element that can lie on their line is \(h_v\).
As \(q_r,q_s\in B_r\), the singleton condition forces \(h_v\) to be
present outside \(B_r\). We have
\(\alpha_s-\alpha_r=2\varepsilon_r x_v\ne0\), so
\(\ker(\alpha_s-\alpha_r)=h_v\). Hence
Lemma~\ref{lem:anchored-route-no-return}\textup{(ii)} gives
\(\Pi(h_v)=B_t\) for some \(r<t\le s\). Since \(h_v\notin B_s=B_r\),
we have \(t<s\). The part \(B_t\) is an augmented star containing
both \(h_v\) and a link \(q_t\) incident with \(u\), so its centre
must be \(v\). Thus \(q_t\) is one of the two links on \(uv\), both
already in \(B_r\), contrary to \(B_t\ne B_r\). No part is entered
twice, so the finiteness of \(\pi\) forces the route to stop at a
safe link or in a triangle part.
\end{proof}

\begin{lemma}
\label{lem:loopless-safe-element}
If \(L=\varnothing\) and \(\A\ne\varnothing\), then \(\A\) has a
\(\pi\)-safe element.
\end{lemma}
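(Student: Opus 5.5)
Since \(L=\varnothing\), every vertex lies in \(V\setminus L\), so an anchored route can be started at any vertex \(u\) and from any link \(q_0\) incident with \(u\). By Lemma~\ref{lem:terminal-bad-route-hits-real-D3}, such a route terminates either at a safe link, which proves the lemma, or on entering a triangle part. The plan is therefore to show that some route avoids the second outcome, or to convert that outcome into a safe element found elsewhere.

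First I would reduce to a convenient situation. A nonempty loopless \(\A\) contains a link, so some vertex \(u\) has positive degree. If \(u\) has degree one in the underlying simple graph with a single link \(q\), then no three-point circuit contains \(q\) except balanced triangles through \(u\), which need two links at \(u\). Hence \(q\) is safe. In general, I would start a route at \(u\) and suppose it ends by entering a triangle part \(B_{s}\) through \(q_s=H_{uv_s}^{\varepsilon_s}\) and \(f_s\). Then \(q_s\) and \(f_s\) are two sides of an unbalanced triangle \(uv_sw\), and by Lemma~\ref{lem:global-support-trichotomy} all three opposite links are present outside \(B_s\). In particular, the third side \(g\) of the triangle, which is incident with \(u\), is in \(B_s\), so \(B_s\) contains two links at \(u\).

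The key step is to restart from the third link of the triangle part and run the telescoping argument of Lemma~\ref{lem:anchored-route-no-return} across the triangle. I would extend the route formally: from \(q_s\) pass to \(g\in B_s\). The difference \(\alpha_g-\alpha_s\) is a defining form of an element on side \(v_sw\), namely \(f_s\) or its opposite. If it is \(f_s\), then \(\{q_s,g,f_s\}\) is a balanced circuit inside one part, which contradicts independence. So it is the opposite \(\bar f_s\), which lies outside \(B_s\). Continuing the route from \(g\) (or from \(\bar q_s\)), I would aim to show that no part can be entered twice, even allowing triangle parts, once the triangle differences are incorporated as additional independent forms. For a triangle part, these are the opposite links, lying in parts distinct from \(B_s\). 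The argument would be a generalization of the proof of Lemma~\ref{lem:terminal-bad-route-hits-real-D3}. If this works, finiteness of \(\pi\) forces termination at a safe link.

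The main obstacle is this second passage through a triangle part. The opposite links \(\bar f_s\) introduced there may lie in the same part as some earlier \(f_i\), which breaks the independence used in Lemma~\ref{lem:anchored-route-no-return}. If the global argument fails, my fallback is an extremal choice. I would pick the route, or the starting link, to minimize the number of parts entered, or choose \(u\) to be a vertex lying in the core \(K(B)\) of an augmented-star part, using Lemma~\ref{lem:coordinate-core-cover} after localizing to a connected component of full rank. I would then argue that a triangle part entered at the end yields a shorter route from one of its opposite links, contradicting minimality.
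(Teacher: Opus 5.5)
Your proposal does not yet contain a proof. The decisive step, continuing the route through a triangle part and showing that no part is ever entered twice, is stated only as an aim, and you note yourself why the independence argument of Lemma~\ref{lem:anchored-route-no-return} breaks there. In fact the extension makes no progress. Since \(\{q_{s-1},q_s,f_s\}\) is a bad circuit, \(\Pi(q_{s-1})\ne B_s\). Hence the third link \(g\) of \(B_s\), which lies on \(uv_{s-1}\), is \(\bar q_{s-1}\). The extended route therefore just returns to the pair \(uv_{s-1}\) with the opposite sign. The form \(\alpha_g-\alpha_{s-1}\) is a multiple of \(x_{v_{s-1}}\), whose hyperplane is absent because \(L=\varnothing\), so the telescoping identity yields nothing. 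Since \(q_{s-1}\) and \(\bar q_{s-1}\) lie in different parts, no contradiction arises either. The fallback via a minimal route is not an argument. Routes depend on the bad circuit chosen at each step, and you give no reason why a route started from an opposite link of the terminal triangle part should be shorter.

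The missing idea is that you do not need to get past triangle parts at all; you only need to count them. Reduce to \(M\) connected, with no isolated vertices and \(|E(M)|\ge2\), and suppose no element is safe. Then the route from every vertex \(u\) ends in a triangle part whose support contains \(u\); as you observed, that part contains two links at \(u\). So there are \(t\ge1\) triangle parts. The underlying graph is connected and unbalanced, giving \(\rk M=|V|\) and \(|\pi|=|V|\). Now compare coordinate cores. The unique singleton part has a core of size \(2\). Every other augmented star has a core of size exactly \(1\), because opposite links form a two-point line when \(L=\varnothing\) and cannot share a part. Triangle parts have empty cores, and there are no \(B_2\)-parts. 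Lemma~\ref{lem:coordinate-core-cover} then gives \(|V|\le|V|+1-t\), so \(t=1\). Since every vertex lies on this single triangle, \(|V|=3\). Any three-point circuit through an element \(e\) of the triangle part \(B\) is a balanced triangle on the same three vertices. It differs from the unbalanced triangle \(B\) on exactly one of the two other sides, so it contains a second element of \(B\) and is not bad. This contradiction is exactly how the paper proves the lemma.
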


\begin{proof}
We may assume that \(M=F(\Sigma)\) is connected and that \(\Sigma\)
has no isolated vertices. Indeed, each matroid component is represented
by its signed subgraph and has a nice induced partition by
Subsection~\ref{subsec:nice-partitions}. Safety in a component implies
safety in \(M\), since every circuit lies in one component.
A one-element component gives a safe element, so assume \(|E(M)|\ge2\).
Suppose that no element is safe. For each \(u\in V\), start an
anchored route at \(u\) from an incident link. By
Lemma~\ref{lem:terminal-bad-route-hits-real-D3}, the route ends in a
triangle part whose support contains \(u\).

Let \(t\ge1\) be the number of triangle parts. The underlying simple graph
is connected, and a triangle part is unbalanced, so
\(\rk M=|V|\) by~\eqref{eq:frame-rank}. Hence
\(|\pi|=|V|\) by Proposition~\ref{prop:nice-localizes}\textup{(i)}.
By Subsection~\ref{subsec:nice-partitions}, there is exactly one
singleton part. Its coordinate core has size \(2\).
By Lemma~\ref{lem:global-support-trichotomy}, every other part is an
augmented star or a triangle part. Two opposite links form a
two-point line when \(L=\varnothing\), so they belong to distinct
parts. Consequently each nonsingleton augmented star has a core of
size \(1\), whereas each triangle part has empty core.
Lemma~\ref{lem:coordinate-core-cover} now gives
\[
 |V|\le\sum_{B\in\pi}|K(B)|=|\pi|+1-t=|V|+1-t.
\]
Thus there is a unique triangle part \(B\). Since every vertex
belongs to the support of a triangle part, \(V\) consists of the
three vertices supporting \(B\). Every element of \(B\) is safe.
Indeed, by
\eqref{eq:signed-frame-three-circuits}, a three-point circuit through
\(e\in B\) must be a balanced triangle on these three vertices.
The triangle formed by \(B\) is unbalanced. To obtain this balanced
triangle from \(B\) while keeping \(e\) fixed, exactly one of the
other two signs must be reversed. Thus the circuit contains another
element of \(B\) and is not bad, contrary to the assumption that
no element is safe.
\end{proof}

\section{Safe elements and the main theorem}
\label{sec:coordinate-safe}

Throughout this section, let \(\Sigma=(G^+,G^-,L)\) be a finite signed
graph on \(V\). Write \(\A=\A(\Sigma)\), \(M=F(\Sigma)\), and
\(U=V\setminus L\), and let \(\pi\) be a nice partition of \(M\).

\subsection{Local coordinate obstructions}
\label{subsec:local-coordinate-obstructions}

We first determine how connected components formed by double edges
on \(U\) can be joined to vertices of \(L\). To exclude the local
configurations that cannot admit a nice partition, we count pairs of
elements on lines. For a simple matroid \(N\), let \(\mathcal L_2(N)\)
be its set of lines and put
\[
 \lambda_2(N)=\sum_{X\in\mathcal L_2(N)}\binom{|X|-1}{2}.
\]

\begin{lemma}
\label{lem:line-excess-bound}
Let \(N\) be a simple matroid of rank \(r\ge2\) on \(m\) elements,
represented by a central arrangement. If \(N\) admits a nice partition, then
\[
 \lambda_2(N)\ge\frac{(m-1)(m-r)}{2(r-1)}.
\]
\end{lemma}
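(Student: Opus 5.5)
The plan is to compare two expressions for the coefficient of \(t^{r-2}\) in \(\chi_N(t)\). One comes from the lattice of flats; the other comes from the factorization in Proposition~\ref{prop:nice-localizes}\textup{(iv)}. Together they will express \(\lambda_2(N)\) through the part sizes. A convexity bound then finishes the argument, once we use the fact that some part is a singleton.

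\emph{Step 1: coefficient from the lattice of flats.} By Whitney's formula \(\chi_N(t)=\sum_X\mu(\hat0,X)t^{r-\rk X}\), the coefficient of \(t^{r-2}\) collects the flats of rank at most \(2\).
\begin{itemize}[label=\textbullet]
 \item Since \(N\) is simple and has no loops, there are no flats of rank \(0\) other than \(\hat0\) contributing to this coefficient.
 \item Each rank-one flat is a single point, and the point flats also contribute nothing to this coefficient.
 \item A line \(X\) with \(|X|=k\) points has \(\mu(\hat0,X)=k-1\).
\end{itemize}
So the coefficient equals \(\sum_{X\in\mathcal L_2(N)}(|X|-1)\). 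Every pair of elements spans exactly one line, hence \(\sum_X\binom{|X|}{2}=\binom m2\). Using \(\binom k2-\binom{k-1}2=k-1\), we get
\[
 \sum_{X\in\mathcal L_2(N)}(|X|-1)=\binom m2-\lambda_2(N).
\]

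\emph{Step 2: coefficient from the factorization.} Let \(b_1,\dots,b_r\) be the part sizes of the nice partition. There are exactly \(r\) of them by Proposition~\ref{prop:nice-localizes}\textup{(i)}, and \(\sum b_i=m\). By Proposition~\ref{prop:nice-localizes}\textup{(iv)}, which is where representability by an arrangement is used, the same coefficient equals the elementary symmetric function \(e_2(b)=\sum_{i<j}b_ib_j\). Substituting \(m^2=\sum b_i^2+2e_2(b)\) into Step 1 gives
\[
 \lambda_2(N)=\binom m2-e_2(b)=\tfrac12\Bigl(\sum_i b_i^2-m\Bigr).
\]

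\emph{Step 3: the convexity bound.} It remains to show \(\sum b_i^2\ge 1+(m-1)^2/(r-1)\).
\begin{itemize}[label=\textbullet]
 \item Apply the singleton condition to the flat \(E(N)\), which has rank \(r\ge2>0\). This shows that \(\pi\) itself has a singleton part, say \(b_1=1\).
 \item Apply Cauchy--Schwarz to the remaining \(r-1\) parts, whose sizes sum to \(m-1\). This gives \(\sum_{i\ge2}b_i^2\ge(m-1)^2/(r-1)\).
\end{itemize}
Therefore
\[
 \lambda_2(N)\ge\frac12\Bigl(1+\frac{(m-1)^2}{r-1}-m\Bigr)=\frac{(m-1)^2-(m-1)(r-1)}{2(r-1)}=\frac{(m-1)(m-r)}{2(r-1)}.
\]

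The only point needing care is Step 1: checking the sign convention and the Möbius values \(\mu=k-1\) on lines. Everything else is direct. Plain Cauchy--Schwarz without the singleton part would only give the weaker bound \(m(m-r)/(2r)\), so using the singleton part is essential.
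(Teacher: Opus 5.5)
Your proof is correct. It reaches the key identity \(\lambda_2(N)=\sum_{B\in\pi}\binom{|B|}{2}\) by a different route from the paper, and then finishes with the same Cauchy--Schwarz step.

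The paper counts locally, line by line. By independence and the singleton condition, the partition induced on each line \(X\) has exactly two parts, one of them a singleton. So \(\binom{|X|-1}{2}\) is exactly the number of pairs of \(X\) lying in a common part. Since every pair spans a unique line, summing over lines gives \(\sum_i\binom{d_i}{2}\).

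You argue globally instead. You compare the coefficient of \(t^{r-2}\) in Whitney's expansion with the coefficient \(e_2(b)\) coming from the factorization in Proposition~\ref{prop:nice-localizes}\textup{(iv)}. Only lines contribute to that coefficient, with \(\mu(\hat0,X)=|X|-1\). Your bullets about rank-\(0\) and rank-\(1\) flats are phrased oddly: those flats simply sit in degrees \(t^{r}\) and \(t^{r-1}\). The conclusion is right.

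What each approach buys:
\begin{itemize}
\item The paper's argument is elementary. It uses only the behaviour of \(\pi\) on lines and the part count from Proposition~\ref{prop:nice-localizes}\textup{(i)}, not Terao's factorization theorem.
\item Your argument relies on that deeper theorem, but it shows the bound depends only on \(\chi_N\). For any nonempty simple matroid, \(\chi_N(1)=0\) and the roots sum to \(m\). So your Steps 2 and 3 go through verbatim, with the singleton part replaced by the root \(1\), whenever \(\chi_N\) has only real roots.
\end{itemize}
The inequality is therefore a Newton--Maclaurin-type constraint. By Terao's factorization theorem it holds for every free arrangement as well. This is consistent with the paper having to exclude the free arrangement \(N_{\triangle}\) by a separate part-size argument rather than by this lemma.
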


\begin{proof}
Choose a nice partition with part sizes \(d_1=1,d_2,\ldots,d_r\),
using the singleton condition and
Proposition~\ref{prop:nice-localizes}\textup{(i)}. The induced partition
on a line \(X\) has two parts, one of which is a singleton. Thus
\(\binom{|X|-1}{2}\) counts the pairs of elements of \(X\) that belong
to the same part. Every pair spans a unique line, so
\[
 \lambda_2(N)=\sum_{i=1}^r\binom{d_i}{2}.
\]
Since \(\sum_{i=2}^r d_i=m-1\), Cauchy's inequality gives
\[
 \lambda_2(N)\ge
 \frac12\left(\frac{(m-1)^2}{r-1}-(m-1)\right)
 =\frac{(m-1)(m-r)}{2(r-1)}.\qedhere
\]
\end{proof}

\begin{lemma}
\label{lem:one-hole-completion}
Suppose \(u\in U\) and \(x,y\in L\) are distinct. If
\(H_{ux}^{\alpha},H_{uy}^{\beta}\in\A\), then
\(H_{xy}^{\alpha\beta}\in\A\).
\end{lemma}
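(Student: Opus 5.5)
We argue by contradiction, using the line-count bound of Lemma~\ref{lem:line-excess-bound} on a small localization. Suppose \(H_{xy}^{\alpha\beta}\notin\A\). First switch at \(x\) and \(y\) so that \(\alpha=\beta=+1\); the frame matroid and niceness are unaffected, so it suffices to show \(H_{xy}^{+}\in\A\). Put \(W=\{u,x,y\}\). The elements supported on \(W\) form a flat, so by Proposition~\ref{prop:nice-localizes}\textup{(ii)} they carry an induced nice partition, and we work in this localization \(N\).

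We list the elements of \(N\). Since \(u\in U\), \(h_u\) is absent, and \(H_{xy}^{+}\) is absent by assumption. Hence \(N\) consists of the four elements \(h_x,h_y,a=H_{ux}^{+},b=H_{uy}^{+}\), together with some subset of the three optional elements \(a'=H_{ux}^{-}\), \(b'=H_{uy}^{-}\), \(c'=H_{xy}^{-}\). Since \(h_x,h_y,a\) are independent, \(\rk N=3\). Write \(m=|N|\), so \(4\le m\le7\).

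Next we read off the lines with at least three points from the circuit list~\eqref{eq:signed-frame-three-circuits}. The possible three-point circuits are \(\{h_x,a,a'\}\), \(\{h_y,b,b'\}\), \(\{h_x,h_y,c'\}\), \(\{a,b',c'\}\) and \(\{a',b,c'\}\). Two further candidates are excluded: \(\{a,b,H_{xy}^{+}\}\) and \(\{a',b',H_{xy}^{+}\}\) need the absent element \(H_{xy}^{+}\), and circuits using \(h_u\) are unavailable. No line contains four points, because any two of the five listed triples share at most one element. So \(\lambda_2(N)\) is the number of these triples whose elements are all present.

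We now compare with Lemma~\ref{lem:line-excess-bound}, which requires \(\lambda_2(N)\ge (m-1)(m-3)/4\), and check every case.
\begin{itemize}[label={}]
\item If \(m=4\), no triple is present, but the bound requires \(\lambda_2\ge 3/4\).
\item If \(m=5\), the single extra element lies in exactly one triple, so \(\lambda_2=1\), while the bound requires \(\lambda_2\ge2\).
\item If \(m=6\), the extra pair \(\{a',b'\}\) gives \(2\) lines, and \(\{a',c'\}\) or \(\{b',c'\}\) gives \(3\) lines, while the bound requires \(\lambda_2\ge 15/4\), so at least \(4\).
\item If \(m=7\), there are \(5\) lines, while the bound requires \(\lambda_2\ge6\).
\end{itemize}
Every case contradicts the existence of a nice partition on \(N\). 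Hence \(H_{xy}^{+}\) is present, and undoing the switching gives \(H_{xy}^{\alpha\beta}\in\A\).

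The only delicate step is the completeness of the line enumeration. It relies on the exact form of the three-circuits in~\eqref{eq:signed-frame-three-circuits} and on the balance computation for the triangles \(\{a,b',c'\}\) and \(\{a',b,c'\}\). I would double-check it, for instance by confirming the sign products \((+)(-)(-)=+\) for both triangles.
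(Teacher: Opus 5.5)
Your proof is correct and is essentially the paper's argument. It uses the same switching to \(\alpha=\beta=+1\), the same localization on \(\{u,x,y\}\) with the same five candidate three-point lines, and the same contradiction with Lemma~\ref{lem:line-excess-bound}. The paper merely compresses your four cases into one identity: with \(q\) the number of optional links present and \(c\in\{0,1\}\) recording \(H_{xy}^{-}\), one has \(\lambda_2(N)=q+c(q-c)\), which falls short of the bound by \(\bigl((q-2c)^2+3\bigr)/4\). One wording fix: in your case \(m=5\), the extra element \emph{completes} exactly one triple, though it may lie in several (e.g.\ \(c'\) lies in three).
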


\begin{proof}
By switching at \(x\) and \(y\), we may assume \(\alpha=\beta=+1\).
Suppose that \(H_{xy}^{+}\) is absent. Let \(N\) be the matroid of
the localization on \(\{u,x,y\}\). It has rank three and a nice
induced partition by Proposition~\ref{prop:nice-localizes}\textup{(ii)}. Besides
\(h_x,h_y,H_{ux}^{+},H_{uy}^{+}\), the only possible elements are
\(H_{ux}^{-},H_{uy}^{-},H_{xy}^{-}\). Let \(a,b,c\in\{0,1\}\) record
their presence in this order, and put \(q=a+b+c\). Thus \(|E(N)|=4+q\).
The lines supported on the pairs \(ux,uy,xy\) contribute \(a,b,c\),
respectively, to \(\lambda_2(N)\). The two possible balanced triangles contribute
\(ac\) and \(bc\). These are all the lines with at least three points, so
\(\lambda_2(N)=a+b+c+ac+bc\). This contradicts
Lemma~\ref{lem:line-excess-bound}, since the difference between its
lower bound and \(\lambda_2(N)\) is
\(\bigl((q-2c)^2+3\bigr)/4>0\).
\end{proof}

\begin{lemma}
\label{lem:two-hole-local-defect}
Let \(u,v\in U\) be distinct and \(x\in L\). If both signs occur on
\(uv\) and a link joins \(u\) to \(x\), then both signs occur on
\(ux\) and \(vx\).
\end{lemma}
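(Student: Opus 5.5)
The plan is to mirror the proof of Lemma~\ref{lem:one-hole-completion}: localize to the vertex set \(\{u,v,x\}\) and show that Lemma~\ref{lem:line-excess-bound} fails unless all four links on \(ux\) and \(vx\) are present. By switching at \(x\) we may assume that the given link on \(ux\) is \(H_{ux}^{+}\). Let \(N\) be the matroid of the localization on \(\{u,v,x\}\). It has a nice induced partition by Proposition~\ref{prop:nice-localizes}\textup{(ii)}. Its rank is three: the digon \(H_{uv}^{+},H_{uv}^{-}\) together with \(h_x\) already spans \(\mathbb K^{\{u,v,x\}}\), since the digon is unbalanced and \(H_{ux}^+\) connects it to \(x\). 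Since \(u,v\in U\), the elements \(h_u,h_v\) are absent. So \(E(N)\) consists of \(H_{uv}^{\pm},h_x,H_{ux}^{+}\) together with possibly \(H_{ux}^{-},H_{vx}^{+},H_{vx}^{-}\). Record the presence of these three optional elements by \(a,b,c\in\{0,1\}\) and put \(q=a+b+c\). Then \(m=|E(N)|=4+q\) and \(r=3\).

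Next I would compute \(\lambda_2(N)\) from the circuit list~\eqref{eq:signed-frame-three-circuits}, by listing all lines with at least three points.
\begin{itemize}[label=\textup{--}]
 \item The pair \(uv\) gives only the two-point line \(\{H_{uv}^+,H_{uv}^-\}\), because \(h_u,h_v\) are absent.
 \item The pair \(ux\) gives the line \(\{h_x,H_{ux}^{+},H_{ux}^{-}\}\) when \(a=1\).
 \item The pair \(vx\) gives the line \(\{h_x,H_{vx}^{+},H_{vx}^{-}\}\) when \(b=c=1\).
\end{itemize}
The balanced triangles \(\{H_{uv}^{\alpha},H_{vx}^{\beta},H_{ux}^{\alpha\beta}\}\) are each three-point lines with no further points. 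They occur with presence indicators \(b\) (signs \(+,+,+\)), \(c\) (signs \(-,-,+\)), \(ac\) (signs \(+,-,-\)) and \(ab\) (signs \(-,+,-\)). Each line above contributes \(1\) to \(\lambda_2\), so
\[
 \lambda_2(N)=a+b+c+bc+ab+ac .
\]

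Finally, compare with the bound \((m-1)(m-r)/(2(r-1))=(3+q)(1+q)/4\) given by Lemma~\ref{lem:line-excess-bound}.
\begin{itemize}[label=\textup{--}]
 \item For \(q=0\), \(\lambda_2=0<3/4\).
 \item For \(q=1\), \(\lambda_2=1<2\).
 \item For \(q=2\), each of the three cases gives \(\lambda_2=3<15/4\).
\end{itemize}
Each of these contradicts Lemma~\ref{lem:line-excess-bound}, so \(q=3\). Hence both signs occur on \(ux\) and on \(vx\), as claimed. For \(q=3\) the values agree, \(\lambda_2=6=6\), so no contradiction arises there, consistent with the statement. The only step needing care is making sure no line was missed in the enumeration. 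In particular, no line contains four points, because any four-point line would need two opposite links on one pair together with a coordinate element, and the only available coordinate element is \(h_x\), which has already been accounted for. I would also double-check the sign bookkeeping for the four balanced triangles after switching.
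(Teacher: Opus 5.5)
Your proof is correct and follows the paper's approach: you localize to \(\{u,v,x\}\), enumerate the three-point lines through the circuit list, and contradict Lemma~\ref{lem:line-excess-bound}. The only difference is bookkeeping. The paper switches at \(u,v\) to assume \(H_{ux}^{+}\) present and \(H_{vx}^{+}\) absent, uses two indicators, and then invokes the symmetry between \(u\) and \(v\). You instead switch at \(x\), keep three indicators, and rule out every case \(q\le2\) directly.
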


\begin{proof}
Suppose that one of the two links on \(vx\) is absent. By switching at \(u\) and
\(v\), we may assume that \(H_{ux}^{+}\) is present and \(H_{vx}^{+}\)
is absent. Let \(N\) be the matroid of the localization on
\(\{u,v,x\}\). It has rank three and inherits a nice partition.
Besides \(h_x,H_{uv}^{+},H_{uv}^{-},H_{ux}^{+}\), the only possible
elements are \(H_{ux}^{-}\) and \(H_{vx}^{-}\). Let \(b,c\in\{0,1\}\)
record their presence, respectively. Then \(N\) has \(4+b+c\) elements.
The line supported on the pair \(ux\) contributes \(b\) to
\(\lambda_2(N)\), and the two possible balanced triangles contribute
\(bc\) and \(c\). These are all the lines with at least three points,
so \(\lambda_2(N)=b+bc+c\). This contradicts
Lemma~\ref{lem:line-excess-bound}, since the difference between its
lower bound and \(\lambda_2(N)\) is
\(\bigl((b-c)^2+3\bigr)/4>0\). Thus both signs occur
on \(vx\). Applying the same argument with \(u,v\) interchanged gives
both signs on \(ux\).
\end{proof}

Let \(D_U\) be the graph on \(U\) whose edges are the pairs carrying both
signs. For the vertex set \(C\) of a component of \(D_U\), let
\(S_C\subseteq L\) consist of the vertices joined by a link to some
vertex of \(C\).

\begin{lemma}
\label{lem:double-component-propagation}
Let \(C\) be the vertex set of a component of \(D_U\) with \(|C|\ge2\).
Both signs occur between every \(c\in C\) and \(x\in S_C\), and between
every two distinct vertices of \(S_C\).
\end{lemma}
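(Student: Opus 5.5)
Both claims follow from repeated use of Lemma~\ref{lem:two-hole-local-defect} along paths in \(D_U\), followed by Lemma~\ref{lem:one-hole-completion} for the pairs inside \(S_C\). Since \(|C|\ge2\) and \(C\) is connected in \(D_U\), every \(c\in C\) has a \(D_U\)-neighbour \(c'\in C\). This means both signs occur on \(cc'\).

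\textbf{Step 1: the pairs \(c\in C\), \(x\in S_C\).} Fix \(x\in S_C\). By definition some \(c_0\in C\) is joined to \(x\) by a link. Let \(W\subseteq C\) be the set of vertices \(c\) for which both signs occur on \(cx\). I will show \(W=C\) by propagating along \(D_U\).

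First, \(c_0\in W\). Choose a \(D_U\)-neighbour \(c'\) of \(c_0\). Lemma~\ref{lem:two-hole-local-defect} applies with \(u=c_0\) and \(v=c'\): both signs lie on \(c_0c'\), and a link joins \(c_0\) to \(x\). It gives both signs on \(c_0x\), and also on \(c'x\).

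Next, suppose \(c\in W\) and \(d\) is any \(D_U\)-neighbour of \(c\). Then both signs lie on \(cd\), and a link joins \(c\) to \(x\). The same lemma gives both signs on \(dx\), so \(d\in W\). Since \(C\) is connected in \(D_U\), induction along paths from \(c_0\) gives \(W=C\).

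\textbf{Step 2: the pairs \(x\ne y\) in \(S_C\).} Fix any \(c\in C\). By Step~1, both \(H_{cx}^{\pm}\) and \(H_{cy}^{\pm}\) are present. Apply Lemma~\ref{lem:one-hole-completion} with \(u=c\in U\):
\begin{itemize}
\item taking \(\alpha=\beta=+1\) gives \(H_{xy}^{+}\in\A\);
\item taking \(\alpha=+1\) and \(\beta=-1\) gives \(H_{xy}^{-}\in\A\).
\end{itemize}
So both signs occur on \(xy\).

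The argument needs only \(u,v\in U\) and \(x\in L\), which holds by the definitions of \(D_U\) and \(S_C\). Two points require care: Lemma~\ref{lem:two-hole-local-defect} needs a link from \(u\) to \(x\), which is why Step~1 starts from the anchor \(c_0\); and it is the connectivity of \(C\) within \(D_U\), not merely within the underlying graph, that lets the induction reach every vertex of \(C\).
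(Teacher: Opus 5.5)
Your proof is correct and is essentially the paper's argument. You start from a vertex of \(C\) joined to \(x\), propagate both signs on \(cx\) along paths in the connected graph \(D_U[C]\) via Lemma~\ref{lem:two-hole-local-defect}, and then apply Lemma~\ref{lem:one-hole-completion} at any \(c\in C\) with both sign choices to obtain both links on \(xy\). Where the paper compresses this into two sentences, you spell out the induction and the choices of \(\alpha,\beta\) explicitly.
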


\begin{proof}
Fix \(x\in S_C\). Some vertex of \(C\) is joined to \(x\), and
Lemma~\ref{lem:two-hole-local-defect}, applied along paths in the
connected graph \(D_U[C]\), gives both signs on every \(cx\) with
\(c\in C\). For distinct \(x,y\in S_C\), choose \(c\in C\).
Both signs occur on \(cx\) and \(cy\), so
Lemma~\ref{lem:one-hole-completion} gives both signs on \(xy\).
\end{proof}

For positive integers \(s,\ell\), let \(\mathsf F_{s,\ell}\) be the
arrangement defined by the signed graph with vertices
\(x_1,\ldots,x_s,c,z_1,\ldots,z_\ell\) and loop set
\(S=\{x_1,\ldots,x_s\}\). Every pair of distinct vertices carries
both signs except the pairs \(z_i z_j\), which carry only a positive link.
Figure~\ref{fig:F-models} shows \(\mathsf F_{1,2}\), \(\mathsf F_{2,1}\),
and \(\mathsf F_{2,2}\). Let \(N_{\triangle}\) and \(\mathsf P_j\),
for \(j=0,1,2\), be the arrangements defined by the signed graphs in
Figure~\ref{fig:finite-models}. In both figures, \(\pm\) denotes both signed
links, \(+\) denotes a single positive link, and a loop at \(v\)
represents \(h_v\).

\begin{figure}[H]
\centering
\begin{tikzpicture}[
  x=1cm,y=1cm,
  every node/.style={font=\small,inner sep=2pt},
  edge/.style={line width=.5pt},
  sign/.style={inner sep=1.5pt},
  crossing/.style={preaction={draw=white,line width=2.5pt}}
]
\begin{scope}
  \coordinate (x1) at (0,1.95);
  \coordinate (z1) at (-1.65,-.9);
  \coordinate (z2) at (1.65,-.9);
  \coordinate (c) at (0,.05);
  \foreach \p/\q/\side in {x1/z1/above left,x1/z2/above right,x1/c/right,c/z1/above left,c/z2/above right}
    \draw[edge] (\p) -- node[sign,\side] {\(\pm\)} (\q);
  \draw[edge] (z1) -- node[sign,below] {\(+\)} (z2);
  \draw[edge] (0,2.2) circle[radius=.25cm];
  \foreach \p in {x1,z1,z2,c} \fill (\p) circle[radius=1.65pt];
  \node[right,inner sep=6pt,yshift=-1pt] at (x1) {\(x_1\)};
  \node[left] at (z1) {\(z_1\)};
  \node[right] at (z2) {\(z_2\)};
  \node[below] at (c) {\(c\)};
  \node at (0,-1.95) {\(\mathsf F_{1,2}\)};
\end{scope}
\begin{scope}[xshift=5cm]
  \coordinate (x1) at (0,1.95);
  \coordinate (x2) at (-1.65,-.9);
  \coordinate (z1) at (1.65,-.9);
  \coordinate (c) at (0,.05);
  \foreach \p/\q/\side in {x1/x2/above left,x1/z1/above right,x1/c/right,x2/z1/below,x2/c/above left,z1/c/above right}
    \draw[edge] (\p) -- node[sign,\side] {\(\pm\)} (\q);
  \draw[edge] (0,2.2) circle[radius=.25cm];
  \draw[edge] (-1.9,-.9) circle[radius=.25cm];
  \foreach \p in {x1,x2,z1,c} \fill (\p) circle[radius=1.65pt];
  \node[right,inner sep=6pt,yshift=-1pt] at (x1) {\(x_1\)};
  \node[below,inner sep=9pt] at (x2) {\(x_2\)};
  \node[right] at (z1) {\(z_1\)};
  \node[below] at (c) {\(c\)};
  \node at (0,-1.95) {\(\mathsf F_{2,1}\)};
\end{scope}
\begin{scope}[xshift=10.7cm]
  \coordinate (x1) at (0,1.95);
  \coordinate (x2) at (-2.1,0);
  \coordinate (z1) at (.7,0);
  \coordinate (c) at (-.7,0);
  \coordinate (z2) at (2.1,0);
  \foreach \p/\q/\side in {x1/x2/above left,x1/c/above left,x1/z1/above right,x1/z2/above right,x2/c/above,c/z1/above}
    \draw[edge] (\p) -- node[sign,\side] {\(\pm\)} (\q);
  \draw[edge] (z1) -- node[sign,above] {\(+\)} (z2);
  \draw[edge] (x2) .. controls (-1.8,-.95) and (.4,-.95) ..
    node[sign,below,pos=.5] {\(\pm\)} (z1);
  \draw[edge,crossing] (c) .. controls (-.4,-.95) and (1.8,-.95) ..
    node[sign,below,pos=.5] {\(\pm\)} (z2);
  \draw[edge] (x2) .. controls (-1.8,-1.8) and (1.8,-1.8) ..
    node[sign,below,pos=.5] {\(\pm\)} (z2);
  \draw[edge] (0,2.2) circle[radius=.25cm];
  \draw[edge] (-2.35,0) circle[radius=.25cm];
  \foreach \p in {x1,x2,z1,c,z2} \fill (\p) circle[radius=1.65pt];
  \node[right,inner sep=6pt,yshift=-1pt] at (x1) {\(x_1\)};
  \node[below,inner sep=9pt] at (x2) {\(x_2\)};
  \node[below right] at (z1) {\(z_1\)};
  \node[below left] at (c) {\(c\)};
  \node[right] at (z2) {\(z_2\)};
  \node at (0,-1.95) {\(\mathsf F_{2,2}\)};
\end{scope}
\end{tikzpicture}
\caption{The signed graphs of \(\mathsf F_{1,2}\), \(\mathsf F_{2,1}\),
and \(\mathsf F_{2,2}\).}
\label{fig:F-models}
\end{figure}

\begin{figure}[H]
\centering
\begin{tikzpicture}[
  x=1cm,y=1cm,
  every node/.style={font=\small,inner sep=2pt},
  edge/.style={line width=.5pt},
  sign/.style={inner sep=1.5pt},
  crossing/.style={preaction={draw=white,line width=2.5pt}}
]
\begin{scope}
  \coordinate (x) at (0,1.95);
  \coordinate (u) at (-1.65,-.9);
  \coordinate (v) at (0,.05);
  \coordinate (w) at (1.65,-.9);
  \foreach \p/\q/\side in {x/u/above left,x/v/right,x/w/above right,u/v/above left,v/w/above right,u/w/below}
    \draw[edge] (\p) -- node[sign,\side] {\(\pm\)} (\q);
  \draw[edge] (0,2.2) circle[radius=.25cm];
  \foreach \p in {x,u,v,w} \fill (\p) circle[radius=1.65pt];
  \node[right,inner sep=6pt,yshift=-1pt] at (x) {\(x\)};
  \node[left] at (u) {\(u\)};
  \node[below] at (v) {\(v\)};
  \node[right] at (w) {\(w\)};
  \node at (0,-1.95) {\(N_{\triangle}\)};
\end{scope}
\foreach \j/\dx/\dy in {0/7/0,1/0/-5.15,2/7/-5.15}{
\begin{scope}[xshift=\dx cm,yshift=\dy cm]
  \coordinate (x) at (0,1.95);
  \coordinate (a) at (-2.1,0);
  \coordinate (b) at (-.7,0);
  \coordinate (c) at (.7,0);
  \coordinate (d) at (2.1,0);
  \foreach \p/\q/\side in {x/a/above left,x/b/above left,x/c/above right,x/d/above right,a/b/above,b/c/above,c/d/above}
    \draw[edge] (\p) -- node[sign,\side] {\(\pm\)} (\q);
  \draw[edge] (a) .. controls (-1.8,-.95) and (.4,-.95) ..
    node[sign,below,pos=.5] {\(+\)} (c);
  \draw[edge,crossing] (b) .. controls (-.4,-.95) and (1.8,-.95) ..
    node[sign,below,pos=.5] {\(+\)} (d);
  \ifnum\j>0
    \draw[edge] (a) .. controls (-1.8,-1.8) and (1.8,-1.8) ..
      node[sign,below,pos=.5] {\(\ifnum\j=1 +\text{ or }-\else\pm\fi\)} (d);
  \fi
  \draw[edge] (0,2.2) circle[radius=.25cm];
  \foreach \p in {x,a,b,c,d} \fill (\p) circle[radius=1.65pt];
  \node[right,inner sep=6pt,yshift=-1pt] at (x) {\(x\)};
  \node[left] at (a) {\(a\)};
  \node[below left] at (b) {\(b\)};
  \node[below right] at (c) {\(c\)};
  \node[right] at (d) {\(d\)};
  \node at (0,-1.95) {\(\mathsf P_{\j}\)};
\end{scope}
}
\end{tikzpicture}
\caption{The signed graphs of \(N_{\triangle}\), \(\mathsf P_0\),
\(\mathsf P_1\), and \(\mathsf P_2\).}
\label{fig:finite-models}
\end{figure}

By the rank formula~\eqref{eq:frame-rank}, each of these arrangements
has the same matroid over every field of characteristic different from
\(2\). For a sufficiently large prime \(q\), the finite-field method
\cite{Athanasiadis1996} computes \(\chi(q)\) by counting points outside
the hyperplanes over \(\mathbb F_q\). Write
\((z)_{\underline\ell}=z(z-1)\cdots(z-\ell+1)\).
For \(\mathsf F_{s,\ell}\), choose the nonzero values at the vertices of
\(S\), no two equal up to sign, in \(\prod_{i=0}^{s-1}(q-2i-1)\) ways.
After excluding the values on \(S\) and their negatives,
\(Q=q-2s-1\) nonzero values remain.
If the value at \(c\) is zero, the values at \(z_1,\ldots,z_\ell\) must
be distinct and nonzero, giving \((Q)_{\underline\ell}\) choices.
For each of the \(Q\) nonzero values at \(c\), the \(z_i\) must avoid
that value and its negative but may be zero. They can therefore take
distinct values from a set of size \(Q-1\). Adding the two cases gives
\((Q)_{\underline\ell}+Q(Q-1)_{\underline\ell}
=(Q)_{\underline\ell}(Q-\ell+1)\), and hence
\[
 \chi_{\mathsf F_{s,\ell}}(t)
 =\left(\prod_{i=0}^{s-1}(t-2i-1)\right)
   (t-2s-1)_{\underline\ell}(t-2s-\ell).
\]

For \(N_{\triangle}\) and \(\mathsf P_0\), first fix a nonzero value
at \(x\). Each remaining vertex has \(Q=q-2\) available values,
including zero. In \(N_{\triangle}\), the values at \(u,v,w\)
are either all nonzero or exactly one is zero. These cases contribute
\((Q-1)(Q-3)(Q-5)\) and \(3(Q-1)(Q-3)\), respectively, giving
\((Q-1)(Q-3)(Q-2)\) in total. In \(\mathsf P_0\), at most one of
the values at \(b,c\) can be zero, since both signs occur on \(bc\).
If both values are nonzero, there are \((Q-1)(Q-3)\) choices. The value
at \(a\) must avoid the value at \(b\), its negative, and the value
at \(c\), leaving \(Q-3\) choices. Similarly, \(d\) has \(Q-3\)
choices, independent of the choice at \(a\). If exactly one of \(b,c\)
is zero, the \(2(Q-1)\) choices at these vertices each allow
\((Q-2)(Q-3)\) choices at \(a,d\). Thus the count for
\(\mathsf P_0\) is \((Q-1)(Q-3)\bigl((Q-3)^2+2(Q-2)\bigr)\).
In both models, multiplying by \(q-1\) accounts for the choice at \(x\).

\begin{samepage}
Adding either link on \(ad\) to \(\mathsf P_0\) gives \(\mathsf P_1\).
Restriction to the added hyperplane identifies the coordinates at \(a,d\)
up to sign, so the restriction is linearly isomorphic to \(N_{\triangle}\).
Adding the opposite link next gives \(\mathsf P_2\). On restriction to
this second hyperplane, the first \(ad\)-link becomes a coordinate
hyperplane at the identified vertex. This restriction is therefore
linearly isomorphic to \(\mathsf F_{2,1}\).
Deletion--restriction now determines the two
remaining polynomials. With \(\Delta(t)=(t-1)(t-3)(t-5)\), the results are
\[
\begin{aligned}
 \chi_{N_{\triangle}}(t)&=(t-1)(t-3)(t-4)(t-5),\\
 \chi_{\mathsf P_0}(t)&=\Delta(t)(t^2-8t+17),\\
 \chi_{\mathsf P_1}(t)&=\Delta(t)(t^2-9t+21),\\
 \chi_{\mathsf P_2}(t)&=\Delta(t)(t^2-10t+26).
\end{aligned}
\]

\end{samepage}

\begin{corollary}
\label{cor:finite-model-obstructions}
For the arrangements defined above, the following statements hold.
\begin{enumerate}[label=\textup{(\roman*)}]
 \item \(N_{\triangle}\) admits no nice partition.
 \item None of \(\mathsf P_0,\mathsf P_1,\mathsf P_2\) admits a nice partition.
 \item For \((s,\ell)=(1,2),(2,1),(2,2)\), every nice partition of
 \(\mathsf F_{s,\ell}\) has at most one part of size three.
\end{enumerate}
\end{corollary}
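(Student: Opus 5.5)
The whole corollary should follow from Proposition~\ref{prop:nice-localizes}\textup{(iv)}: if \(\pi\) is a nice partition, then \(\chi_M(t)=\prod_{B\in\pi}(t-|B|)\). So the multiset of part sizes is the multiset of nonzero roots of the characteristic polynomial. Each model has rank equal to its number of vertices, since every connected component contains a loop or an unbalanced digon. Hence \(\chi_{\A}=\chi_{M(\A)}\), and I will read the part sizes directly off the polynomials computed above. Throughout I take \(\mathbb K\) of characteristic \(\ne 2\). The matroid is the same over every such field, and niceness depends only on the matroid.

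For \textup{(i)}, the computed polynomial gives \(\chi_{N_{\triangle}}(t)=(t-1)(t-3)(t-4)(t-5)\). A nice partition would therefore have four parts of sizes \(1,3,4,5\). The first step is to check that these sizes are consistent with the element count. There are \(13\) elements: \(h_x\) plus \(12\) links on the six pairs, and indeed \(1+3+4+5=13\). So the root test alone does not exclude \(N_{\triangle}\), and a structural argument is needed. I would use Lemma~\ref{lem:line-excess-bound}, in its exact form from the proof: \(\lambda_2(N)=\sum_i\binom{d_i}{2}\), where the \(d_i\) are the part sizes. With sizes \(1,3,4,5\) this gives \(\sum_i\binom{d_i}{2}=0+3+6+10=19\). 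Next I count \(\lambda_2(N_{\triangle})\) directly. The three-point lines are:
\begin{enumerate}[label=\textup{(\alph*)}]
 \item the lines \(\{h_x,H^{+}_{xv},H^{-}_{xv}\}\) for \(v\in\{u,v,w\}\), giving \(3\) lines;
 \item the balanced triangles on each of the four vertex triples, with \(4\) balanced triangles per triple, giving \(16\) lines.
\end{enumerate}
All lines have at most three points, so \(\lambda_2=3+16=19\). This matches, so the line count does not settle \textup{(i)} either.

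A finer argument is therefore required, and this is the main obstacle. My plan is to use Lemma~\ref{lem:global-support-trichotomy}: each part is an augmented star, a \(B_2\)-part, or a triangle part. The \(B_2\)-parts and triangle parts have size \(3\). An augmented star centred at a vertex \(v\) can contain at most one link from each pair \(vw\), because two opposite links on \(vw\) with \(h_v\) absent form a two-point line unless \(h_w\) is present. This bounds augmented stars: centred at \(u\), \(v\) or \(w\) a star has size at most \(3\), and centred at \(x\) or at \(0\) it has size at most \(4\) (one link per pair \(xv\), plus \(h_x\)). So no part can have size \(5\), which contradicts the required sizes and proves \textup{(i)}.

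For \textup{(ii)}, the quadratic factors \(t^2-8t+17\), \(t^2-9t+21\) and \(t^2-10t+26\) have discriminants \(-4\), \(-3\) and \(-4\). None of them has integer roots, so \(\chi\) does not split into integer linear factors, and by Proposition~\ref{prop:nice-localizes}\textup{(iv)} no nice partition exists.

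For \textup{(iii)}, the factorization of \(\chi_{\mathsf F_{s,\ell}}\) lists the roots explicitly:
\begin{enumerate}[label=\textup{(\alph*)}]
 \item \((1,2)\): roots \(1,3,2,2\);
 \item \((2,1)\): roots \(1,3,5,4\);
 \item \((2,2)\): roots \(1,3,5,4,4\).
\end{enumerate}
In every case the root \(3\) occurs exactly once, so any nice partition has exactly one part of size three.
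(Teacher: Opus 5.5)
\textbf{There is a genuine gap in (i).} Your treatment of (ii) matches the paper. Your strategy for (i) is also the paper's: part sizes $1,3,4,5$ from Proposition~\ref{prop:nice-localizes}(iv), then the trichotomy of Lemma~\ref{lem:global-support-trichotomy}. The failure is the key size bound. The principle that an augmented star centred at $v$ contains at most one link on each pair $vw$ holds only when neither $h_v$ nor $h_w$ is present.

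In $N_{\triangle}$ the element $h_x$ is present. So for each $y\in\{u,v,w\}$ the opposite links on $xy$ lie on the three-point line $\{h_x,H^{+}_{xy},H^{-}_{xy}\}$. Niceness only requires $h_x$ to lie in a different part from them, so both links may share a part. Two consequences follow:
\begin{itemize}
\item A star centred at $x$ that avoids $h_x$ can contain all six links at $x$.
\item A star centred at $u$ can contain both links on $ux$ plus one link on each of $uv,uw$, so it has up to four elements, not three.
\end{itemize}
Your conclusion that no part has size $5$ is therefore unproved. A part consisting of five of the six links at $x$ is not excluded by any local consideration.

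\textbf{The missing step uses the size-four part $B_4$ as well, as in the paper.}
\begin{itemize}
\item A star centred at $0$ is just $\{h_x\}$. Stars centred at $u,v,w$ have at most four elements, and $B_2$- and triangle parts have three. Hence $B_5$ is centred at $x$.
\item If $h_x\in B_5$, your bound gives $|B_5|\le 4$. So $B_5$ consists of five of the six links at $x$.
\item $B_4$ is also an augmented star. Centred at $x$, it has at most two elements: $h_x$ and the one remaining link at $x$.
\item Centred at $u$, $v$ or $w$, it has at most three elements: one link on each of the two pairs inside $\{u,v,w\}$, plus possibly that one remaining link at $x$.
\end{itemize}
Both cases contradict $|B_4|=4$.

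\textbf{In (iii) your conclusion is right, but your root lists are miscomputed.} The displayed formula gives
\[
\chi_{\mathsf F_{1,2}}(t)=(t-1)(t-3)(t-4)^2,\quad
\chi_{\mathsf F_{2,1}}(t)=(t-1)(t-3)(t-5)^2,\quad
\chi_{\mathsf F_{2,2}}(t)=(t-1)(t-3)(t-5)(t-6)^2.
\]
As a check, the roots must sum to the numbers of hyperplanes, $12$, $14$ and $21$; your lists do not. Since $3$ is still a simple root in each case, the argument for (iii) goes through once the arithmetic is corrected.
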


\begin{proof}
For \textup{(i)}, a nice partition of \(N_{\triangle}\) would have
part sizes \(1,3,4,5\), by
Proposition~\ref{prop:nice-localizes}\textup{(iv)}.
Let \(B_4,B_5\) be the parts of sizes four and five. By
Lemma~\ref{lem:global-support-trichotomy}, both are augmented stars.
Their centres lie in \(\{x,u,v,w\}\), since \(h_x\) is the only
element incident with the auxiliary vertex \(0\).
Opposite links on a pair in \(\{u,v,w\}\) form a two-point line and
cannot belong to the same part. A part centred at \(u,v\), or \(w\)
therefore has at most four elements, so \(B_5\) is centred at \(x\).
If \(h_x\in B_5\), at most one link on each of \(xu,xv,xw\) can
belong to \(B_5\), since a part cannot contain a line. This would give
\(|B_5|\le4\). Hence \(B_5\) consists of five of the six links incident with \(x\).
This leaves at most two elements for \(B_4\) if it is centred at
\(x\), and at most three if it is centred at \(u,v\), or \(w\)
(at most two links within \(\{u,v,w\}\) and one to \(x\)).
Both cases contradict \(|B_4|=4\).

For \textup{(ii)}, the quadratic factors have discriminants \(-4,-3,-4\),
respectively, so the characteristic polynomials do not split over the
integers, contrary to Proposition~\ref{prop:nice-localizes}\textup{(iv)}.
For \textup{(iii)}, the characteristic polynomial of each of
\(\mathsf F_{1,2}\), \(\mathsf F_{2,1}\), and \(\mathsf F_{2,2}\)
has \(3\) as a simple root. Two parts of size three would give a
factor \((t-3)^2\), by the same proposition.
\end{proof}

\begin{proposition}
\label{prop:attached-double-component-structure}
Let \(C\) be the vertex set of a component of \(D_U\) with \(|C|\ge2\)
and \(S_C\ne\varnothing\). Then \(D_U[C]\) is a star, and every
two distinct leaves are joined by exactly one signed link in \(\Sigma\).
\end{proposition}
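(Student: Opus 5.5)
The plan is to argue by localization, using the finite models of Corollary~\ref{cor:finite-model-obstructions} as forbidden configurations. Fix \(x\in S_C\). By Lemma~\ref{lem:double-component-propagation}, both signs occur on every pair \(cx\) with \(c\in C\). Any localization on a vertex set \(W\subseteq C\cup\{x\}\) then inherits a nice partition, by Proposition~\ref{prop:nice-localizes}\textup{(ii)} and the remark at the end of Subsection~\ref{subsec:nice-partitions}. Since \(C\subseteq U\), no vertex of \(C\) carries a loop, so the localization is determined by \(h_x\), the double edges \(cx\), and the links inside \(W\cap C\).

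\emph{Step 1: \(D_U[C]\) contains no triangle.} Suppose \(u,v,w\in C\) pairwise carry both signs. The localization on \(\{x,u,v,w\}\) is exactly \(N_{\triangle}\), which admits no nice partition by Corollary~\ref{cor:finite-model-obstructions}\textup{(i)}. This is a contradiction.

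\emph{Step 2: \(D_U[C]\) contains no induced path on four vertices.} Suppose \(a,b,c,d\in C\) form an induced path \(a b c d\) in \(D_U\). The pairs \(ac\), \(bd\), \(ad\) then carry at most one link each. Two sub-cases arise.
\begin{enumerate}[label=\textup{(\alph*)}]
 \item If some such pair, say \(ac\), carries no link, I would use the localization on \(\{x,a,b,c\}\), where \(a b c\) is an induced path. I expect to need an additional small model here, or a direct \(\lambda_2\) count via Lemma~\ref{lem:line-excess-bound}, as in Lemmas~\ref{lem:one-hole-completion} and~\ref{lem:two-hole-local-defect}. The configuration consists of \(h_x\), the double edges \(xa,xb,xc\), the double edges \(ab,bc\), and nothing on \(ac\): \(11\) elements in rank four. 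I will verify that the line count violates the bound. This is plausibly what the later text handles, and it is the step I expect to be the main obstacle.
 \item If \(ac\) and \(bd\) both carry a single link, switch at \(a\) and \(d\) to make both positive. The localization on \(\{x,a,b,c,d\}\) is then \(\mathsf P_0\), \(\mathsf P_1\), or \(\mathsf P_2\), according to the number of links on \(ad\). Since \(ad\) is not double, only \(\mathsf P_0\) and \(\mathsf P_1\) occur, and Corollary~\ref{cor:finite-model-obstructions}\textup{(ii)} gives a contradiction.
\end{enumerate}

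\emph{Step 3: conclude that \(D_U[C]\) is a star.} A connected graph with no triangle and no induced \(P_4\) is a complete bipartite graph \(K_{p,q}\). It remains to exclude \(p,q\ge2\), which contains a \(4\)-cycle \(a b c d\) in \(D_U\) with \(ac\) and \(bd\) not double. If \(ac\) or \(bd\) has no link, Step 2(a) applies to a three-vertex induced path. Otherwise, switch so that both diagonals are positive. The localization on \(\{x,a,b,c,d\}\) is then \(\mathsf F_{2,2}\)-like rather than \(\mathsf P_j\). I expect a direct \(\lambda_2\) or characteristic-polynomial check to rule it out, and would compute its polynomial by the same finite-field count. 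Hence \(D_U[C]\) is a star \(K_{1,q}\) with \(q\ge1\).

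\emph{Step 4: distinct leaves carry exactly one link.} Let \(a,b\) be distinct leaves and \(c\) the centre. Leaves are not adjacent in \(D_U\), so \(ab\) carries at most one link. If it carries none, the induced path \(a c b\) together with \(x\) is excluded by the Step 2(a) computation. Hence exactly one signed link joins \(a\) and \(b\). The single count in Step 2(a) therefore serves both Step 2 and Step 4, which is why I would settle it first.
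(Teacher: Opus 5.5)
Your Steps 1, 2 and 4 follow the paper's argument: triangle-freeness via \(N_{\triangle}\), the \(\lambda_2\) count for a two-edge path with an empty diagonal, and the models \(\mathsf P_j\) for three-edge paths. The count you deferred in Step 2(a) does go through. The lines with at least three points are the three lines \(\{h_x,H^+_{xv},H^-_{xv}\}\) for \(v\in\{a,b,c\}\), together with the four balanced triangles on each of \(\{x,a,b\}\) and \(\{x,b,c\}\). So \(\lambda_2=11<35/3\), exactly as in the paper. That case is therefore not an obstacle.

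The genuine gap is in Step 3. To exclude \(K_{p,q}\) with \(p,q\ge2\), you must rule out a \(4\)-cycle \(a-b-c-d-a\) in \(D_U[C]\) whose diagonals \(ac,bd\) each carry a single link. You leave this open and call the configuration ``\(\mathsf F_{2,2}\)-like rather than \(\mathsf P_j\)''. That identification is wrong: \(\mathsf F_{2,2}\) has two loops and \(21\) elements, whereas your localization has only the loop at \(x\). After switching both diagonals to \(+\), the localization on \(\{x,a,b,c,d\}\) has double edges on \(ab,bc,cd,ad\) and on every pair through \(x\), with positive diagonals \(ac,bd\). This is precisely \(\mathsf P_2\), which Corollary~\ref{cor:finite-model-obstructions}\textup{(ii)} already excludes.

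Of the two checks you propose for this case, the \(\lambda_2\) check would fail. Here \(m=19\) and \(r=5\). There are four three-point lines through \(h_x\) and \(28\) balanced triangles, so \(\lambda_2=32\ge 63/2\), and Lemma~\ref{lem:line-excess-bound} gives no contradiction. Only the non-splitting of \(\chi_{\mathsf P_2}(t)=\Delta(t)(t^2-10t+26)\) rules the configuration out.

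Once you see this, the detour through induced \(P_4\)'s and the cograph characterization is unnecessary; the paper argues directly instead. Take any path \(a-b-c-d\) in \(D_U[C]\), induced or not. Triangle-freeness makes \(ac\) and \(bd\) non-double, and Step 2(a) then makes each carry exactly one link. The pair \(ad\) carries \(j\in\{0,1,2\}\) links, so the localization is \(\mathsf P_j\). Hence \(D_U[C]\) has no path with three edges at all, and a connected triangle-free graph with no such path is a star.
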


\begin{proof}
Choose \(x\in S_C\). If \(u,v,w\) formed a triangle in \(D_U[C]\),
the matroid of the localization on \(\{x,u,v,w\}\) would be
isomorphic to \(M(N_{\triangle})\), by
Lemma~\ref{lem:double-component-propagation}, contradicting
Corollary~\ref{cor:finite-model-obstructions}\textup{(i)}.
Thus \(D_U[C]\) is triangle-free.
Now let \(u-v-w\) be a path of length two in \(D_U[C]\).
Lemma~\ref{lem:double-component-propagation} gives both signs on
\(xu,xv,xw\), and triangle-freeness excludes a double edge on \(uw\).
If no link on \(uw\) were present, the matroid \(N\) of the localization
on \(\{x,u,v,w\}\) would have rank four and eleven elements. Its lines with
at least three points are the three lines supported on the pairs
\(xu,xv,xw\), four balanced triangles on \(\{x,u,v\}\), and four on
\(\{x,v,w\}\), so \(\lambda_2(N)=11\). Lemma~\ref{lem:line-excess-bound}
would require \(\lambda_2(N)\ge35/3\), a contradiction. Hence \(uw\)
carries exactly one link.

Suppose next that \(a-b-c-d\) is a path of length three in \(D_U[C]\).
Both signs occur between \(x\) and each of \(a,b,c,d\), and the preceding
argument gives exactly one signed link on each of \(ac,bd\).
Since these pairs have disjoint endpoints, switching makes both links
positive while preserving all double edges. The matroid of the
localization on \(\{x,a,b,c,d\}\) is therefore isomorphic to
\(M(\mathsf P_j)\), where \(j\) is the number of links on \(ad\),
contradicting
Corollary~\ref{cor:finite-model-obstructions}\textup{(ii)}. Thus
\(D_U[C]\) is connected, triangle-free, and has no path of length three,
so it is a star. If \(c\) is its centre, any two distinct vertices of
\(C\setminus\{c\}\) are joined by a path of length two, so the preceding
argument gives exactly one signed link between them.
\end{proof}
\subsection{Existence of safe elements}
\label{subsec:existence-safe-elements}

\begin{lemma}
\label{lem:triangle-part-coordinate-support}
The support of every triangle part of \(\pi\) meets \(L\) in at
most one vertex.
\end{lemma}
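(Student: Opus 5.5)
The plan is to argue by contradiction and reduce to a rank-three localization, where the part-structure results of Section~\ref{sec:local-structure} leave too little room. Let \(B\) be a triangle part on the vertices \(i,j,k\), and suppose two of them, say \(i,j\), lie in \(L\). By Lemma~\ref{lem:global-support-trichotomy}, \(B\) consists of one link on each of \(ij,jk,ik\), and all three opposite links are present outside \(B\). So both signs occur on every pair in \(\{i,j,k\}\), and \(h_i,h_j\in\A\).

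Consider the localization on \(W=\{i,j,k\}\). It is a flat, and by Proposition~\ref{prop:nice-localizes}\textup{(ii)} the partition \(\pi_W\) is nice. Its elements are the six links, \(h_i\), \(h_j\), and possibly \(h_k\), so \(m=8\) or \(9\). The rank is three, so \(\pi_W\) has exactly three parts by Proposition~\ref{prop:nice-localizes}\textup{(i)}. One of these parts is \(B\) itself: \(B\subseteq W\), so \(B\) is a part of \(\pi_W\), and it has size three.

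The matroid \(M|W\) is connected, since for instance the lines on \(ij\) and the balanced triangles link everything. By the remark after Proposition~\ref{prop:nice-localizes}, \(\pi_W\) therefore has exactly one singleton part. The third part \(B'\) then has \(m-4\ge4\) elements. All of them lie in the set \(R\) consisting of the coordinate elements present on \(W\) together with the three links opposite to those of \(B\).

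Now apply Lemma~\ref{lem:global-support-trichotomy} to \(B'\), viewed as a part of \(\pi\); the parts of \(\pi_W\) other than \(B\) are parts of \(\pi\) intersected with the flat. A cleaner route is to apply the same support argument inside the localization, which is itself the arrangement of a signed graph, namely the signed graph restricted to \(W\).
\begin{itemize}[label=\(\cdot\)]
 \item If \(B'\) is a \(B_2\)-part or a triangle part, it has size three, which is too small.
 \item If \(B'\) is an augmented star with centre \(0\), it consists of coordinate elements only, so it has at most \(3\) elements, and at most \(2\) if \(h_k\) is absent.
 \item If \(B'\) is an augmented star with centre \(v\in W\), its elements within \(R\) are \(h_v\) (if present) and the two opposite links at \(v\). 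That gives at most \(3\) elements.
\end{itemize}
In every case \(|B'|\le3<m-4\), a contradiction.

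The only step needing care is justifying that the trichotomy applies to parts of \(\pi_W\). Here I would invoke Lemma~\ref{lem:global-support-trichotomy} for the signed graph \(\Sigma|_W\) with its nice partition \(\pi_W\). This is legitimate because the lemma was proved for an arbitrary signed graph with an arbitrary nice partition.
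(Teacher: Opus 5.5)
Your argument is correct, but it takes a different route from the paper's. The paper never uses the rank-three localization or any part count; it works with just two lines. Write \(e_{ij},e_{jk},e_{ik}\) for the links of \(B\).
\begin{itemize}
\item The four-point line supported on \(ij\) meets \(B\) only in \(e_{ij}\), so \(h_i,h_j,\bar e_{ij}\) lie in one other part \(D\).
\item The line supported on \(jk\) meets \(B\) only in \(e_{jk}\) and contains \(h_j\in D\), so \(\bar e_{jk}\in D\) as well.
\item Then \(h_i\) and \(\bar e_{jk}\), with disjoint augmented supports \(0i\) and \(jk\), form a two-point line inside \(D\).
\end{itemize}

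You instead count on the flat \(W\). There are three parts, one of them \(B\) and one a singleton, so the remaining part has \(m-4\ge4\) elements in \(R\). But any admissible part inside \(R\) has at most three elements.

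The paper's argument is shorter and needs only the singleton condition on lines and the circuit list. Yours uses Proposition~\ref{prop:nice-localizes} and Lemma~\ref{lem:global-support-trichotomy}, but it shows clearly where the hypothesis enters. With only one vertex of \(W\) in \(L\), the same count gives a third part of size \(3\) and no contradiction.

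Two steps can be trimmed:
\begin{itemize}
\item You do not need connectedness of \(M|W\). The singleton condition applied to the flat \(W\) itself already gives a singleton part of \(\pi_W\), and it cannot be \(B\).
\item You do not need to re-invoke the trichotomy for \(\Sigma|_W\). Since \(B'\) lies in a part of \(\pi\), its augmented supports are pairwise intersecting. The elements of \(R\) have distinct augmented supports among the edges of the complete graph on \(\{0,i,j,k\}\), and an intersecting family of such edges has at most three members.
\end{itemize}
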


\begin{proof}
Let \(B=\{e_{ab},e_{ac},e_{bc}\}\) be a triangle part, where
\(e_{ij}\) denotes its link supported on \(ij\). All three opposite
links of \(B\) are present by Lemma~\ref{lem:global-support-trichotomy}.
Suppose that \(a,b\in L\). The line supported on \(ab\) meets \(B\)
only in \(e_{ab}\). Since it meets exactly two parts, the elements
\(h_a,h_b,\bar e_{ab}\) belong to one other part, say \(D\).
The line supported on \(bc\) likewise meets \(B\) only in \(e_{bc}\).
As \(h_b\in D\), its other elements, including \(\bar e_{bc}\), lie
in \(D\). But \(h_a\) and \(\bar e_{bc}\) have disjoint augmented
supports \(0a\) and \(bc\), so they form a two-point line contained
in \(D\), contradicting niceness.
\end{proof}

\begin{proposition}
\label{prop:triangle-coordinate-safe-element}
If \(\pi\) has a triangle part whose support meets \(L\), then
\(\A\) has a \(\pi\)-safe element.
\end{proposition}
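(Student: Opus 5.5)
By Lemma~\ref{lem:triangle-part-coordinate-support}, the support of the triangle part \(B\) meets \(L\) in exactly one vertex. I would write \(B=\{e_{ab},e_{ac},e_{bc}\}\) with \(a\in L\) and \(b,c\in U\). By Lemma~\ref{lem:global-support-trichotomy}, the opposite links \(\bar e_{ab},\bar e_{ac},\bar e_{bc}\) are all present outside \(B\).

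First I would record the forced local structure. The line supported on \(ab\) is \(\{h_a,e_{ab},\bar e_{ab}\}\), since \(h_b\) is absent. It meets \(B\) only in \(e_{ab}\), so by the singleton condition \(h_a\) and \(\bar e_{ab}\) lie in one part \(D\). The line on \(ac\) likewise puts \(\bar e_{ac}\) in \(D\). Thus \(D\) is an augmented star centred at \(a\), and \(e_{ab},e_{ac}\) are unsafe. So the natural candidate is \(e_{bc}\), or more generally a link reached from it.

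Next I would list the bad circuits through \(e_{bc}\). Since \(h_b,h_c\) are absent, \eqref{eq:signed-frame-three-circuits} shows that every three-point circuit through \(e_{bc}\) is a balanced triangle \(\{e_{bc},H_{bk}^{\alpha},H_{ck}^{\beta}\}\). For \(k=a\) this circuit contains another element of \(B\), since \(B\) is unbalanced, so it is not bad. Hence a bad circuit uses a fourth vertex \(k\notin\{a,b,c\}\), with \(H_{bk},H_{ck}\) in one augmented star \(P\) centred at \(k\).

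I would then run the anchored route at \(u=b\) starting from \(q_0=e_{bc}\). This is legitimate because \(b\notin L\). By Lemma~\ref{lem:terminal-bad-route-hits-real-D3}, the route stops at a safe link, in which case we are done, or it enters a triangle part \(B'\ne B\) through a link at \(b\). In the second case I would iterate the argument. Either \(B'\) avoids \(L\) and its elements are handled as in Lemma~\ref{lem:loopless-safe-element}, or \(B'\) again meets \(L\) in exactly one vertex, and I repeat with \(B'\) in place of \(B\).

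To make the iteration terminate, I would use the linear forms \(\alpha_i\) and Lemma~\ref{lem:anchored-route-no-return}. These should show that no triangle part is revisited. They should also show that the forms \(\beta_i\) of the entered parts, together with \(h_a\in D\), stay linearly independent, which bounds the process by \(|\pi|\).

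The main obstacle is this last step: controlling the interaction between the route and the coordinate part \(D\). The route may pass through links at \(a\), and the part \(D\), which contains \(h_a\), could then be entered. If the direct independence argument fails, I would fall back on localizing to the support of the configuration, namely \(\{a,b,c,k\}\) together with the vertices visited by the route. On that localization I would count cores via Lemma~\ref{lem:coordinate-core-cover}, as in Lemma~\ref{lem:loopless-safe-element}, or compare \(\lambda_2\) against Lemma~\ref{lem:line-excess-bound}, to rule out every configuration in which all elements are unsafe.
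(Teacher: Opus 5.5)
Your set-up is the right one: the part \(D\ni h_a,\bar e_{ab},\bar e_{ac}\) centred at \(a\), and the anchored route at \(b\) from \(e_{bc}\). There is, however, a genuine gap exactly where the route reaches a triangle part.

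The claim that the route enters some \(B'\ne B\) is unjustified. Lemma~\ref{lem:terminal-bad-route-hits-real-D3} makes the entered parts \(B_1,B_2,\ldots\) pairwise distinct, but \(B=\Pi(q_0)\) is not one of them, so \(B_m=B\) is not excluded. In fact, if no element is safe, \(B_m=B\) is forced.

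The iteration you propose in its place has neither a valid base case nor a termination argument. A triangle part avoiding \(L\) cannot be ``handled as in'' Lemma~\ref{lem:loopless-safe-element}. That proof is a global core count on a connected loopless arrangement, and it yields safety only after showing that \(V\) is the vertex set of the unique triangle part. Lemma~\ref{lem:anchored-route-no-return} controls a single route with a fixed anchor. It cannot stop a chain of routes with changing anchors from cycling through triangle parts. The fallback of localizing and counting cores or \(\lambda_2\) is not an argument as stated.

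The missing ingredient is a uniqueness statement that forces the route back to \(B\). Both signs occur on every side of \(B\), by Lemma~\ref{lem:global-support-trichotomy}. Hence \(b,c\) lie in one component \(C\) of \(D_U\) with \(a\in S_C\), and Proposition~\ref{prop:attached-double-component-structure} makes \(D_U[C]\) a star. The terminal triangle part of the route has \(b\) in its support and again has all sides doubled, so its vertices in \(U\) lie in \(C\). It cannot lie entirely in \(U\), since the star is triangle-free. Lemma~\ref{lem:triangle-part-coordinate-support} excludes two vertices in \(L\). Thus it uses two vertices of \(C\) and one of \(S_C\). Moreover, \(B\) is the only triangle part of this kind. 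A second one would make the localization on the union of their vertex sets have matroid \(M(\mathsf F_{s,\ell})\) with \((s,\ell)\in\{(1,2),(2,1),(2,2)\}\) and two parts of size three, contrary to Corollary~\ref{cor:finite-model-obstructions}\textup{(iii)}. Hence \(B_m=B\), and Lemma~\ref{lem:anchored-route-no-return}\textup{(i)} gives \(p_m=e_{ab}\).

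The interaction with \(D\) that you saw as the main obstacle is then exactly the contradiction:
\begin{enumerate}[label=\textup{(\arabic*)}]
 \item Because \(B\) is unbalanced, \(\ker(\alpha_m-\alpha_0)=\bar e_{ac}\in D\), so \(D=B_t\) for some \(t<m\).
 \item The link \(p_t\) is at \(b\) and lies in a part whose only centre is \(a\), so \(p_t=\bar e_{ab}\).
 \item Then \(\ker(\alpha_m-\alpha_t)=h_a\in D\), and Lemma~\ref{lem:anchored-route-no-return}\textup{(ii)} applied to \(p_t,\ldots,p_m\) makes \(D\) an entered part again after step \(t\). This contradicts the distinctness of entered parts.
\end{enumerate}
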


\begin{proof}
Suppose that \(\A\) has no \(\pi\)-safe element, and let \(B\)
be a triangle part whose support meets \(L\). By
Lemma~\ref{lem:triangle-part-coordinate-support}, exactly one of its
vertices, say \(x\), lies in \(L\). Both signs occur on each side
of its triangle, by Lemma~\ref{lem:global-support-trichotomy}.
Its two vertices in \(U\) therefore lie in the vertex set \(C\)
of one component of \(D_U\), with
\(|C|\ge2\) and \(x\in S_C\). By
Proposition~\ref{prop:attached-double-component-structure},
\(D_U[C]\) is a star and every two distinct leaves are joined by
exactly one signed link in \(\Sigma\). Write \(B=\{q_0,q_1,q_2\}\), with supports
\(cz,cx,zx\), respectively, where \(c\) is the centre of the star
and \(z\) is a leaf.

We first show that \(B\) is the only triangle part supported on two
vertices of \(C\) and one vertex of \(S_C\).
Suppose that \(B'\ne B\) is another such part. It also uses the
centre and a leaf. The parts \(B\) and \(B'\) cannot use the same
three vertices, since their signs would be opposite on all three sides,
making \(B'\) balanced. Consider the localization on the union of
the vertex sets of \(B\) and \(B'\). If two leaves occur, switch
the link between them to positive.
By Lemma~\ref{lem:double-component-propagation}, the matroid of this
localization is isomorphic to \(M(\mathsf F_{s,\ell})\), where \(s\)
is the number of vertices in \(L\) and \(\ell\) the number of leaves
used by the two parts. The possible pairs \((s,\ell)\) are
\((1,2)\), \((2,1)\), and \((2,2)\).
In each case, the induced nice partition has two parts of size three,
contrary to
Corollary~\ref{cor:finite-model-obstructions}\textup{(iii)}.

The three-point lines \(\{h_x,q_1,\bar q_1\}\) and
\(\{h_x,q_2,\bar q_2\}\) give
\(T=\Pi(h_x)=\Pi(\bar q_1)=\Pi(\bar q_2)\ne B\).
By Lemma~\ref{lem:global-support-trichotomy}, \(T\) is an augmented
star. The augmented supports \(0x,cx,zx\) have only \(x\) in
common, so \(T\) is centred at \(x\).
Let \(p_0,\ldots,p_m\) be an anchored route at \(c\) starting from
\(p_0=q_0\), with entered parts \(B_1,\ldots,B_m\).
By Lemma~\ref{lem:terminal-bad-route-hits-real-D3}, the route ends
in a triangle part \(B_m\), and the entered parts are pairwise
distinct. We claim that \(B_m=B\). Its triangle contains
\(c\). Both signs occur on each of its sides, so all its vertices
in \(U\) lie in \(C\). It cannot lie entirely in \(U\), since
\(D_U[C]\) is a star, and
Lemma~\ref{lem:triangle-part-coordinate-support} excludes two vertices in
\(L\). Thus \(B_m\) is supported on two vertices of \(C\) and
one vertex of \(S_C\), so uniqueness gives \(B_m=B\).
Hence \(p_m\in\{q_0,q_1\}\), and
Lemma~\ref{lem:anchored-route-no-return}\textup{(i)} excludes
\(p_m=p_0=q_0\). Therefore \(p_m=q_1\).

We now show that the route must enter \(T\) twice, contradicting the distinctness of its entered parts. For each \(i\), let \(\alpha_i\) be the defining form of \(p_i\)
with coefficient \(1\) at \(x_c\). Since \(B\) is unbalanced,
\eqref{eq:signed-frame-three-circuits} gives
\(\ker(\alpha_m-\alpha_0)=\bar q_2\).
Lemma~\ref{lem:anchored-route-no-return}\textup{(ii)} gives
\(B_t=\Pi(\bar q_2)=T\) for some
\(1\le t\le m\). Since \(B_m=B\ne T\), we have \(t<m\).
The link \(p_t\) is incident with \(c\) and belongs to the part
\(T\) centred at \(x\), so it lies on \(cx\). As \(q_1\in B\),
we have \(p_t=\bar q_1\). The links \(p_t\) and \(p_m=q_1\)
are opposite, so \(\ker(\alpha_m-\alpha_t)=h_x\). Applying the
same lemma to the segment \(p_t,\ldots,p_m\) now gives
\(T=\Pi(h_x)\in\{B_{t+1},\ldots,B_m\}\), contradicting
\(B_t=T\) because the entered parts are pairwise distinct.
\end{proof}

\begin{lemma}
\label{lem:full-coordinate-safe-element}
If \(L=V\) and \(\A\ne\varnothing\), then \(\A\) has a
\(\pi\)-safe element.
\end{lemma}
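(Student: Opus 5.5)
The first step is to show that \(\pi\) has no triangle parts. Since \(L=V\), the support of a triangle part would contain three vertices of \(L\), which Lemma~\ref{lem:triangle-part-coordinate-support} forbids. Lemma~\ref{lem:global-support-trichotomy} then says that every part is an augmented star or a \(B_2\)-part. Next I will reduce, as in Lemma~\ref{lem:loopless-safe-element}, to the case that \(M\) is connected with at least two elements. Every vertex carries \(h_v\), and each link \(H_{ij}^{\sigma}\) lies on a line with \(h_i,h_j\), so connectivity is inherited by vertex components. Moreover \(\rk M=|V|\), hence \(|\pi|=|V|\).

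Second, I will translate safety of a coordinate element into a statement about the lines \(X_{ij}\) supported on the pairs \(ij\). Because all coordinate elements are present, the only lines through \(h_i\) with at least three points are the lines \(X_{ij}\), each consisting of \(h_i,h_j\) and the links on \(ij\). In rank two, every three-subset of \(X_{ij}\) is a circuit. Since \(\pi_{X_{ij}}\) has exactly two parts, one of them a singleton, \(h_i\) is safe if and only if, for every \(j\) such that \(ij\) carries a link, \(h_i\) is not the singleton part of \(\pi_{X_{ij}}\). I will describe bad circuits through a link \(H_{ij}^{\sigma}\) analogously: they lie on \(X_{ij}\) or are balanced triangles. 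The aim is then to find a vertex \(i\) such that \(h_i\) lies in the large part of every line \(X_{ij}\) through it.

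Third, I will apply the core count of Lemma~\ref{lem:coordinate-core-cover}. A part centred at \(0\) that contains two coordinate elements has empty core, and so does a \(B_2\)-part. An augmented star centred at \(v\in V\) has core \(\{v\}\), while the unique singleton part has a core of size at most \(2\). Since the cores cover the \(|V|\) vertices using \(|V|\) parts, the assignment from parts to centres is almost a bijection. There is at most one part with empty core, compensated by a singleton link part with core of size two. Consequently, for all but at most one vertex \(v\), the part \(B_v\) centred at \(v\) is unique. If \(h_v\) is not in \(B_v\), it lies in the exceptional centre-\(0\) or \(B_2\)-part.

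Finally, I will derive a contradiction from the assumption that no \(h_i\) is safe. Under this assumption, each \(i\) has a neighbour \(j=\phi(i)\) such that \(h_i\) is the singleton on \(X_{ij}\). Then \(h_j\) and the links on \(ij\) all lie in \(\Pi(h_j)\), which must therefore be centred at \(j\), so \(\Pi(h_j)=B_j\). Following \(i\mapsto\phi(i)\) produces a cycle \(i_1,\ldots,i_k\). I then intend to use the independence of \(\pi\), in the spirit of Lemma~\ref{lem:anchored-route-no-return}, to obtain a linear dependence among forms \(x_{i_t}\mp x_{i_{t+1}}\) taken from distinct parts \(B_{i_{t+1}}\). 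The case \(k=2\) will be handled directly, since \(h_i\) and \(h_j\) would then both be singletons on \(X_{ij}\). The main obstacle is this last step, because such a cycle is dependent only when it is balanced. If it is unbalanced, I would instead show that some link on the cycle, or the exceptional part, supplies a safe element, checking balanced triangles with Lemma~\ref{lem:one-hole-completion}-type local counts on the localization supported on the cycle's vertices.
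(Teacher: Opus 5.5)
Your reduction, your characterization of the safety of \(h_i\) through the lines \(X_{ij}\), the pointer map \(\phi\), and your handling of balanced cycles and of \(k=2\) are all sound. The gap is the one you flag yourself, and it is genuine. For an unbalanced cycle \(i_1\to i_2\to\cdots\to i_k\to i_1\), the chosen links are independent in the frame matroid, so independence of \(\pi\) yields no contradiction. Your fallback, that some link on the cycle or the exceptional part is safe via local counts, is not an argument: you do not say which element would be safe or why.

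The missing idea is to use the singleton condition on a flat rather than independence. Let \(Y=\{i_1,\ldots,i_k\}\), and let \(F\) be the flat of all elements supported on \(Y\). Since \(L=V\), \(F\) contains every \(h_{i_t}\), so \(\rk F=k\) whether or not the cycle is balanced. Put \(P_t=\Pi(h_{i_t})\). As you observe, \(P_t\) is an augmented star whose only centre is \(i_t\), so these \(k\) parts are pairwise distinct. They all meet \(F\), so by Proposition~\ref{prop:nice-localizes}\textup{(iii)} they are the only parts meeting \(F\). Since \(h_{i_{t-1}}\) is the singleton on \(X_{i_{t-1}i_t}\), each \(P_t\cap F\) contains \(h_{i_t}\) together with a link on \(i_{t-1}i_t\). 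Hence \(\pi_F\) has no singleton part, contradicting niceness. This rules out every cycle of \(\phi\) at once, balanced or not and including \(k=2\).

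With this repair your argument is complete and shorter than the paper's. The paper uses the same flat argument, but only to exclude unbalanced directed cycles in an orientation built from a core count. It shows that the cores cover \(V\) with at most one empty-core part, and obtains a unique part \(B_v\) with core \(\{v\}\) for each \(v\) in a set \(W\). It orients each link between vertices of \(W\) away from the vertex whose part contains it and proves the orientation acyclic. It then checks that \(h_v\) is safe for a source \(v\), treating separately a link singleton part and \(W=\varnothing\).

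Your pointer map makes all that bookkeeping, including your third step, unnecessary. This helps, because that step is imprecise as written. A part consisting of the two opposite links on \(vw\) is centred at \(v\) but has core \(\{v,w\}\). It must first be excluded using the \(1,3\) split on four-point lines.
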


\begin{proof}
We may assume that the underlying simple graph is connected and has at least
two vertices. Its components give direct summands with nice induced
partitions, and safety in a summand implies safety in \(M\).
The coordinate element at an isolated vertex is safe. The circuits
\(\{h_i,h_j,H_{ij}^{\sigma}\}\) along the edges of the connected
graph show that \(M\) is connected. Write \(n=|V|\). Since all
coordinate elements are present, \(\rk M=n\), and \(\pi\) has
\(n\) parts by Proposition~\ref{prop:nice-localizes}\textup{(i)}.
Exactly one part is a singleton, by the observation in
Subsection~\ref{subsec:nice-partitions}.

The coordinate cores cover \(V\), by
Lemma~\ref{lem:coordinate-core-cover}, and each has size at most two.
On every four-point line, the induced partition has two parts of
sizes \(1\) and \(3\). A nonsingleton part with core \(\{i,j\}\)
would consist of the two opposite links on \(ij\), giving a part of
size two on \(\{h_i,h_j,H^+_{ij},H^-_{ij}\}\). Thus a core of size
two occurs only for a part consisting of a single link.
If the singleton part consists of a coordinate element, the \(n\)
cores have size at most one and cover \(n\) vertices, so they are
distinct singletons. Otherwise, the singleton part is \(\{g\}\),
where \(g\) is a link on \(ab\). The \(n\) coordinate elements lie
in the other \(n-1\) parts, so some part \(T\) contains two of them
and has empty core. The covering property gives
\[
 n\leq\sum_{B\in\pi}|K(B)|\leq 2+(n-2)=n.
\]
Equality holds throughout. Hence \(T\) is the only empty-core part,
and the remaining cores, apart from \(K(\{g\})=\{a,b\}\), are
distinct singletons covering \(V\setminus\{a,b\}\).
Both \(h_a,h_b\) lie in \(T\), since neither can belong to
\(\{g\}\) or to a part with a different vertex in its core.
Every link in \(T\) must meet both \(a\) and \(b\), since otherwise
it would form a two-point line in \(T\) with \(h_a\) or \(h_b\).
Thus every such link is on \(ab\). In either case, let \(W\) consist
of the vertices \(v\) for which some part has core \(\{v\}\), and
write \(B_v\) for that unique part.
If \(W=\varnothing\), then \(V=\{a,b\}\) and the parts are
\(T\) and \(\{g\}\). Every element of \(T\) is safe, since the
only other part is a singleton, proving the claim in this case.

We may therefore assume \(W\ne\varnothing\). Every link joining two
vertices of \(W\) lies in one of their parts. Orient a link on
\(uv\) from \(u\) to \(v\) if it belongs to \(B_u\), and from
\(v\) to \(u\) otherwise. If there is a directed simple cycle,
including a possible digon, its edges belong to the distinct parts
\(B_v\) indexed by its vertices. They therefore form a partial
transversal of \(\pi\). A balanced cycle is a circuit, contrary to
independence. For an unbalanced cycle on \(m\) vertices, the normal
vectors span the coordinate subspace on those vertices. Its closure
\(F\) thus has rank \(m\) and contains \(h_v\) at every cycle
vertex \(v\). Since the cycle edges already meet \(m\) distinct
parts, Proposition~\ref{prop:nice-localizes}\textup{(iii)} shows
that these are all the parts meeting \(F\). In particular, the
empty-core part, when present, does not meet \(F\). Each \(h_v\)
must therefore belong to \(B_v\), since it cannot belong to
\(B_w\) for \(w\ne v\). Every induced part on \(F\) contains
a cycle edge and its coordinate element, contrary to the singleton
condition. Hence the orientation is acyclic.

Choose a source \(v\) in this orientation. Every link from \(v\)
to another vertex of \(W\) belongs to \(B_v\). In the case with
singleton part \(\{g\}\) on \(ab\), a link from \(v\) to \(a\)
or \(b\) lies neither in \(T\) nor in \(\{g\}\). Among the remaining
cores, only \(\{v\}\) is contained in its support, so it also
belongs to \(B_v\). Thus every link incident with \(v\) belongs
to \(B_v\). We claim that \(h_v\) is safe. In a circuit
\(\{h_v,h_u,H_{vu}^{\sigma}\}\), the last two elements belong to
different parts, since \(H_{vu}^{\sigma}\in B_v\) and
\(h_u\notin B_v\). In a circuit
\(\{h_v,H^+_{vu},H^-_{vu}\}\), both links belong to \(B_v\).
The induced part sizes \(1,3\) on the four-point line, together
with \(h_u\notin B_v\), force \(h_v\in B_v\), so this circuit
is not bad either. These are all three-point circuits through \(h_v\),
by~\eqref{eq:signed-frame-three-circuits}, proving the claim.
\end{proof}

\begin{lemma}
\label{lem:global-safe-element}
If \(\A\ne\varnothing\), then \(\A\) has a \(\pi\)-safe element.
\end{lemma}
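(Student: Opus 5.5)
We argue by contradiction: assume \(\A\ne\varnothing\) and that no element is \(\pi\)-safe. The two extreme cases are already settled, \(L=\varnothing\) by Lemma~\ref{lem:loopless-safe-element} and \(L=V\) by Lemma~\ref{lem:full-coordinate-safe-element}. As in those proofs, we first pass to a connected component of \(M\) with at least two elements, using the induced nice partition and the fact that every circuit lies in one component. We may therefore assume that \(U\) and \(L\) are both nonempty and that the underlying graph is connected. By Proposition~\ref{prop:triangle-coordinate-safe-element}, we may also assume that every triangle part is supported inside \(U\).

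The main tool is the anchored route. Since every link is unsafe, Lemma~\ref{lem:terminal-bad-route-hits-real-D3} shows that an anchored route started at any \(u\in U\) from any incident link ends in a triangle part containing \(u\) in its support. By the reduction above, this triangle lies in \(U\). All three sides of such a triangle carry both signs, by Lemma~\ref{lem:global-support-trichotomy}, so its vertices lie in one component \(C\) of \(D_U\) that contains a triangle. Proposition~\ref{prop:attached-double-component-structure} forbids triangles in \(D_U[C]\) when \(S_C\ne\varnothing\), so \(S_C=\varnothing\). Thus every \(u\in U\) lies in a component \(C\) of \(D_U\) that has no links to \(L\) at all.

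Next we show that links exist from \(U\) to \(L\). Connectedness and \(U,L\ne\varnothing\) give some link joining \(u\in U\) to \(x\in L\), provided \(u\) has an incident link. But then the component \(C\ni u\) has \(x\in S_C\), which contradicts the previous paragraph. The only remaining alternative is that no vertex of \(U\) is joined to \(L\). In that case, connectedness of the underlying graph forces \(U=\varnothing\) or \(L=\varnothing\), and both are already handled.

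The step needing the most care is the claim that a route started at \(u\) really exists and stays at \(u\). The first link must be incident with \(u\), which holds once \(u\) has any link. The definition of an anchored route requires \(u\notin L\), which holds since \(u\in U\). The route's terminal triangle must also contain \(u\), and this holds because every \(p_i\) is incident with \(u\). If some \(u\in U\) is isolated in the underlying graph, connectedness with \(|V|\ge2\) excludes this. I expect these bookkeeping points to be routine, so the argument reduces to combining the three structural results above.
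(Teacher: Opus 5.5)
Your proof is correct and follows essentially the paper's route. An anchored route from \(u\in U\) ends in a triangle part, which Proposition~\ref{prop:triangle-coordinate-safe-element} forces into \(U\); this yields a triangle in a component \(C\) of \(D_U\) with \(S_C\ne\varnothing\), contradicting Proposition~\ref{prop:attached-double-component-structure}. The only difference is bookkeeping: the paper starts the route from the \(U\)--\(L\) link itself and then splits \(M\) into the summands supported on \(L\) and on \(U\), so it never needs your preliminary reduction to a connected component.
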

\begin{proof}
Suppose that \(M\) has no \(\pi\)-safe element. We first show that
no link joins \(u\in U\) to \(x\in L\). If such a link existed,
an anchored route at \(u\) starting from it would end in a triangle
part \(B\) whose support contains \(u\), by
Lemma~\ref{lem:terminal-bad-route-hits-real-D3}.
Proposition~\ref{prop:triangle-coordinate-safe-element} forces all three
vertices of \(B\) to lie in \(U\). Both signs occur on each side
of its triangle, so these vertices lie in the vertex set \(C\)
of one component of \(D_U\). The original link on \(ux\) gives
\(x\in S_C\). Proposition~\ref{prop:attached-double-component-structure}
then makes \(D_U[C]\) a star, contradicting the triangle in it.
Consequently the elements supported on \(L\) form a direct summand
of \(M\), represented by an arrangement containing every coordinate
hyperplane on \(L\). If \(L\ne\varnothing\),
Lemma~\ref{lem:full-coordinate-safe-element} gives a safe element
in this summand, hence in \(M\). If \(L=\varnothing\),
Lemma~\ref{lem:loopless-safe-element} gives a safe element directly.
Both cases contradict our assumption.
\end{proof}

\subsection{Proof of the main theorem}
\label{sec:main-proof}

\begin{proof}[Proof of Theorem~\ref{thm:main-intro}]
We induct on \(|\A|\), starting with the empty arrangement and its
empty partition. If \(\A\ne\varnothing\), choose a \(\pi\)-safe
element \(e\) by Lemma~\ref{lem:global-safe-element}.
Proposition~\ref{prop:three-circuit-deletion-bridge} gives a bijective
restriction map and nice induced partitions on the deletion and
restriction. Both arrangements are again signed-graphic, up to a
linear isomorphism. This is immediate for deletion. For restriction
to \(h_i\), substitute \(x_i=0\). For restriction to
\(H_{ij}^{\sigma}\), substitute \(x_i=\sigma x_j\).
After rescaling the equations and removing repetitions, the resulting
hyperplanes have equations \(x_k=0\) or \(x_k=\pm x_\ell\). Both
arrangements have fewer hyperplanes than \(\A\), so induction applies
to their induced partitions. The recursive step at \(e\)
shows that \((\A,\pi)\) is inductively factored.
\end{proof}

\section{Freeness and Terao's conjecture}
\label{sec:realization-space}

A nice partition gives a factorization of the characteristic
polynomial into linear factors, with the part sizes as its nonzero
roots \cite{Terao1992}. Such factorizations also occur for free
arrangements, whose characteristic polynomials have their exponents
as roots by Terao's factorization theorem
\cite[Theorem~4.137]{OrlikTerao1992}. Nevertheless, an arrangement
admitting a nice partition need not be free \cite{HogeRoehrle2016}.
Jambu and Paris~\cite{JambuParis1995} proved freeness under the
stronger assumption of an inductive factorization. For signed-graphic
arrangements, Theorem~\ref{thm:main-intro} shows that every nice
partition satisfies this stronger condition. Thus every signed-graphic
arrangement admitting a nice partition is free, with nonzero
exponents equal to the part sizes. The converse fails even for real
signed-graphic arrangements.

\begin{example}
\label{ex:free-not-nice}
Consider the real arrangement \(N_{\triangle}\) from
Subsection~\ref{subsec:local-coordinate-obstructions}.
Deleting \(h_x\) gives the reflection arrangement of type \(D_4\),
while restriction to \(h_x\) gives that of type \(B_3\).
These arrangements are free with exponents \(\{1,3,3,5\}\) and
\(\{1,3,5\}\), respectively
\cite[Propositions~6.82 and~6.85]{OrlikTerao1992}.
Terao's addition--deletion theorem
\cite[Theorem~4.51]{OrlikTerao1992} therefore shows that
\(N_{\triangle}\) is free with exponents \(\{1,3,4,5\}\).
However, Corollary~\ref{cor:finite-model-obstructions}\textup{(i)}
shows that it admits no nice partition.
\end{example}

The inductive factorization obtained in Theorem~\ref{thm:main-intro}
can be transferred to any arrangement with an isomorphic intersection
lattice. This relates our result to Terao's conjecture, which asserts
that freeness is determined
by the intersection lattice over a fixed field
\cite[Conjecture~4.138]{OrlikTerao1992}.
Let \(\A\) be a signed-graphic arrangement with a nice partition
\(\pi\), and let \(\mathcal B\) be a central arrangement with
\(L(\mathcal B)\cong L(\A)\). Fix an isomorphism between these
lattices. It induces a bijection between the hyperplanes and carries
\(\pi\) to a partition of \(\mathcal B\). At each step of an
inductive factorization, the corresponding deletions and restrictions
have isomorphic intersection lattices. The induced bijections identify
their partitions and commute with the restriction maps. Thus the
inductive factorization of \(\pi\) supplied by
Theorem~\ref{thm:main-intro} transfers to \(\mathcal B\).
Together with the exponent formula above, this gives the following
corollary.

\begin{corollary}
\label{cor:realization-transfer}
\label{cor:terao-signed-frame-nice}
Let \(\A\) be a signed-graphic arrangement of rank \(r\) with a nice
partition \(\pi=(\pi_1,\ldots,\pi_r)\), and let \(\mathcal B\) be a
central arrangement over \(\mathbb K\) with \(L(\mathcal B)\cong L(\A)\). Every
isomorphism \(L(\A)\cong L(\mathcal B)\) carries \(\pi\) to an
inductive factorization of \(\mathcal B\). In particular,
\(\mathcal B\) is free with nonzero exponents
\(|\pi_1|,\ldots,|\pi_r|\).
\end{corollary}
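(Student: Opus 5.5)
The plan is to transfer the recursion of Theorem~\ref{thm:main-intro} along the lattice isomorphism by induction on \(|\A|\), and then read off freeness from Jambu--Paris. Fix an isomorphism \(\phi:L(\A)\to L(\mathcal B)\). It restricts to a bijection between the atoms, that is, between the hyperplanes of \(\A\) and of \(\mathcal B\). It also preserves rank, and it preserves the relation ``\(X\le Y\)''. Consequently it induces an isomorphism of simple matroids \(M(\A)\cong M(\mathcal B)\), since flats and closures are determined by the lattice. Let \(\phi(\pi)\) be the image partition. Independence and the singleton condition are matroidal, so \(\phi(\pi)\) is a nice partition of \(\mathcal B\), and \(\pi\)-safety of an element corresponds to \(\phi(\pi)\)-safety of its image, because bad three-point circuits are matroidal.

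The key step is to show that the recursion in the proof of Theorem~\ref{thm:main-intro} can be mirrored in \(\mathcal B\). I would prove the stronger statement that for every finite simple matroid \(N\) with a partition \(\pi_N\), whether \((N,\pi_N)\) is inductively factored in the matroid sense of the preceding subsection depends only on the isomorphism type of the pair. This holds by induction on \(|E(N)|\), because the definition uses only deletion \(N\setminus e\), simple contraction \(\operatorname{si}(N/e)\), and the map \(\rho\) to parallel classes. All of these are preserved by matroid isomorphism, as the paper already remarks.

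The main point needing care is that the arrangement-level definition agrees with the matroid-level one for the arrangement \(\mathcal B\), not only for signed-graphic ones. Here I would use \eqref{eq:restriction-contraction}. For \(H_0\in\mathcal B\), the matroid \(M(\mathcal B^{H_0})\) is isomorphic to \(\operatorname{si}(M(\mathcal B)/e)\), and the restriction map \(\rho\) becomes the parallel-class map. The argument behind Proposition~\ref{prop:three-circuit-deletion-bridge} is stated for an arbitrary central arrangement, so it applies to \(\mathcal B\). Induction on \(|\mathcal B|\) then shows that \((\mathcal B,\phi(\pi))\) is inductively factored exactly when \((M(\mathcal B),\phi(\pi))\) is inductively factored as a matroid pair. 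The same holds for \(\A\), so the conclusion for \(\A\) from Theorem~\ref{thm:main-intro} transfers to \(\mathcal B\). The one subtlety is that the deletions and restrictions of \(\mathcal B\) are no longer signed-graphic. The induction therefore has to be phrased as ``the pair \((\mathcal B,\phi(\pi))\) has the same matroid data as \((\A,\pi)\)'', rather than as an application of the theorem to \(\mathcal B\) directly.

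Finally, Jambu--Paris~\cite{JambuParis1995} shows that an inductively factored arrangement is free, with exponents given by the part sizes together with zeros for the \(\ell-r\) dimensions of the centre. This gives freeness of \(\mathcal B\) with nonzero exponents \(|\pi_1|,\ldots,|\pi_r|\). As a consistency check, this matches Proposition~\ref{prop:nice-localizes}\textup{(iv)} and Terao's factorization theorem.
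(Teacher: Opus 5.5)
Your proposal is correct and follows essentially the same argument as the paper. The paper likewise transfers the inductive factorization supplied by Theorem~\ref{thm:main-intro} step by step along the lattice isomorphism: corresponding deletions and restrictions have isomorphic lattices, and the induced bijections respect the partitions and the restriction maps. It then invokes Jambu--Paris for freeness with the part sizes as nonzero exponents; your matroid-level bookkeeping via \eqref{eq:restriction-contraction} is an equivalent, more explicit version of that lattice-level argument.
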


Terao's conjecture therefore holds for the intersection lattices of
signed-graphic arrangements admitting a nice partition.
Corollary~\ref{cor:realization-transfer} also applies to arrangements
that are not projectively equivalent to any signed-graphic arrangement,
as the following example shows. Here two arrangements are called
\emph{projectively equivalent} if an invertible linear change of
coordinates carries one arrangement onto the other.

\begin{example}
\label{ex:nonfano-realization-family}
Let \(\A_0\) be the arrangement in \(\mathbb R^4\) defined by
\(x_1(x_1^2-x_2^2)(x_1^2-x_3^2)(x_2^2-x_3^2)\), and set
\[
 \mathcal B=\A_0\cup
 \{\ker x_4,\ker(x_1-x_4),\ker(x_1-\sqrt2\,x_4)\}.
\]
Let \(\mathcal C\) be obtained from \(\mathcal B\) by replacing
\(\ker(x_1-\sqrt2\,x_4)\) with \(\ker(x_1+x_4)\).
Then \(\mathcal C\) is signed-graphic, and this replacement preserves
the intersection lattice. Indeed, the seven defining forms of
\(\A_0\) do not involve \(x_4\). In either arrangement, adjoining
one of the three new forms to any subset of the original forms
increases its rank by one. Any two new forms span
\(\langle x_1,x_4\rangle\), so adjoining two or three gives a rank
equal to one plus the dimension of the span of \(x_1\) and the chosen
original forms. Thus corresponding subsets have the same rank,
and \(L(\mathcal B)\cong L(\mathcal C)\).

The seven equations defining \(\A_0\), viewed in \(\mathbb R^3\),
define the non-Fano arrangement. Hoge and
R\"ohrle~\cite[Example~3.19]{HogeRoehrle2016} give the nice partition
\[
 \pi_0=\bigl(\{h_1\},
 \{H_{12}^{+},H_{12}^{-},H_{23}^{+}\},
 \{H_{13}^{+},H_{13}^{-},H_{23}^{-}\}\bigr).
\]
The same partition is nice for \(\A_0\), since the unused fourth
coordinate does not change the intersection lattice. Adding
\(\{h_4,H_{14}^{+},H_{14}^{-}\}\) as a fourth part gives an independent
partition of \(\mathcal C\), since each new defining form has a nonzero
\(x_4\)-coefficient. Any flat meeting the new part in at
least two elements contains \(h_1\), so the singleton condition is
also preserved.
Thus \(\mathcal C\) has a nice
partition with part sizes \(1,3,3,3\). Since
\(L(\mathcal B)\cong L(\mathcal C)\),
Corollary~\ref{cor:realization-transfer} shows that \(\mathcal B\)
is free with exponents \(\{1,3,3,3\}\).

Nevertheless, \(\mathcal B\) is not projectively equivalent to any
signed-graphic arrangement. Suppose otherwise. After a suitable real
linear change of coordinates, all its hyperplanes have defining
equations with rational coefficients. By a further change of coordinates
with rational coefficients, we may arrange that the images of
\(\ker x_1\) and \(\ker x_4\) are the coordinate hyperplanes
\(y_1=0\) and \(y_4=0\). The images of \(\ker(x_1-x_4)\) and
\(\ker(x_1-\sqrt2\,x_4)\) then have equations
\[
 y_1-t y_4=0,\qquad y_1-\sqrt2\,t y_4=0
\]
for some nonzero real number \(t\). Both hyperplanes must still admit
defining equations with rational coefficients. The first therefore
forces \(t\in\mathbb Q\), while the second forces
\(\sqrt2\,t\in\mathbb Q\), contradicting the irrationality of
\(\sqrt2\).
\end{example}

In the real case, the part sizes that determine the nonzero exponents
also occur in a formula counting regions. For a real arrangement
\(\A\) in \(W\), removing all its hyperplanes leaves the complement
\(W\setminus\bigcup_{H\in\A}H\), whose connected components are
called \emph{regions}. Let \(\mathcal R(\A)\) denote their set.
Fix a base region \(B\), and write \(S(B,R)\) for the set of
hyperplanes separating \(B\) from a region \(R\). Ordering the
regions by \(R\le R'\) if \(S(B,R)\subseteq S(B,R')\) gives the
\emph{poset of regions} \(P(\A,B)\), with rank function
\(R\mapsto|S(B,R)|\). Its rank-generating function therefore counts
regions according to the number of hyperplanes separating them from \(B\).
Jambu and Paris~\cite[Proposition~3.4 and Theorems~5.1,~6.1]{JambuParis1995}
proved that, for an inductive factorization
\(\pi=(\pi_1,\ldots,\pi_r)\), one can choose \(B\) so that
\(P(\A,B)\) is a lattice and its rank-generating function is a
product of factors \(1+t+\cdots+t^{|\pi_i|}\), one for each part.
Their results also apply when the common intersection of the
hyperplanes is nonzero, since passing to the quotient by this
intersection preserves both the inductive factorization and the
poset of regions. They further asked whether these conclusions hold
for real arrangements admitting a nice partition. For real
signed-graphic arrangements, Theorem~\ref{thm:main-intro} gives the
following affirmative answer.

\begin{corollary}\label{cor:regions-factorization}
Let \(\A\) be a real signed-graphic arrangement with a nice partition
\(\pi=(\pi_1,\ldots,\pi_r)\). There is a base region \(B\) such that
\(P(\A,B)\) is a lattice and its rank-generating function satisfies
\[
 \sum_{R\in\mathcal R(\A)}t^{|S(B,R)|}
 =\prod_{i=1}^r(1+t+\cdots+t^{|\pi_i|}).
\]
\end{corollary}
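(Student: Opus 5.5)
The corollary is obtained by combining Theorem~\ref{thm:main-intro} with the results of Jambu and Paris quoted just before the statement.

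First I apply Theorem~\ref{thm:main-intro} to the real signed-graphic arrangement \(\A=\A(\Sigma)\). Here \(\mathbb K=\mathbb R\), whose characteristic is not \(2\). The theorem shows that the given nice partition \(\pi\) is an inductive factorization of \(\A\). By Proposition~\ref{prop:nice-localizes}\textup{(i)}, \(\pi\) has exactly \(r=\rk\A\) parts, which matches the indexing \(\pi_1,\ldots,\pi_r\) in the statement.

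Next I reduce to the essential case. Let \(X_0\) be the common intersection of all hyperplanes, which is the span of the standard basis vectors \(e_v\) at the isolated, loopless vertices of \(\Sigma\) when \(\Sigma\) is balanced there. Passing to the quotient \(W/X_0\) gives an essential arrangement \(\A_0\). The restriction of each step of the recursion, and hence each deletion and restriction in it, corresponds under this quotient. So \(\pi\) remains an inductive factorization of \(\A_0\), because the recursive definition only involves the intersection lattice and the restriction maps, and both are unchanged. The regions of \(\A\) are the preimages of the regions of \(\A_0\), and a hyperplane separates two regions of \(\A\) exactly when its image separates their images. Consequently \(P(\A,B)\cong P(\A_0,B_0)\) as ranked posets, where \(B_0\) is the image of \(B\).

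Finally I invoke Jambu--Paris \cite[Proposition~3.4 and Theorems~5.1,~6.1]{JambuParis1995} for \(\A_0\) with its inductive factorization \(\pi\). This gives a base region \(B_0\) such that \(P(\A_0,B_0)\) is a lattice whose rank-generating function is \(\prod_{i=1}^r(1+t+\cdots+t^{|\pi_i|})\). Pulling \(B_0\) back to a region \(B\) of \(\A\) yields the claim for \(\A\).

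The only point needing care is matching conventions: one must check that the Jambu--Paris notion of inductive factorization agrees with the recursive definition used here (that of \cite[Definition~3.8]{HogeRoehrle2016}), so that their theorems apply verbatim. I expect this to be a bookkeeping check rather than a genuine obstacle, and the rest of the argument is formal.
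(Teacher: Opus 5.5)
Your proposal follows the paper's argument: Theorem~\ref{thm:main-intro} makes \(\pi\) an inductive factorization, and the results of Jambu and Paris then give the required base region and product formula, applied after passing to the quotient by the common intersection of the hyperplanes (which preserves both the inductive factorization and the poset of regions). One harmless slip is your explicit description of that common intersection, which in general is not spanned by standard basis vectors: it has one dimension for each balanced component of \(\Sigma\), counting isolated loopless vertices, and for the graphical arrangement of a connected graph it is spanned by \((1,\ldots,1)\); your argument never uses this description.
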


\begin{samepage}
\section*{Declaration on the use of AI}

During the preparation of this manuscript, the authors used AI-assisted
tools for language editing, \LaTeX{} formatting, consistency checks,
computational checks, and proof simplification. In particular, these
tools assisted with checking arguments involving local signed graphs, verifying
characteristic-polynomial calculations, and simplifying
the anchored-route arguments.
The authors independently verified the mathematical arguments and
reviewed, revised, and finalized all material prepared with AI assistance.
The authors take full responsibility for the correctness and
content of the manuscript.

\end{samepage}

\bigskip
\begin{samepage}
\small
\noindent
\textsc{College of Science, National University of Defense Technology,}\\
\textsc{Changsha 410073, Hunan, P. R. China}\\
\href{mailto:liangweikang08@163.com}{\texttt{liangweikang08@163.com}}
(Weikang Liang)\\
\href{mailto:yue.zhou.ovgu@gmail.com}{\texttt{yue.zhou.ovgu@gmail.com}}
(Yue Zhou)
\end{samepage}

\medskip
\begin{samepage}
\small
\noindent
\textsc{School of Mathematics, Hunan University,}\\
\textsc{Changsha 410082, Hunan, P. R. China}\\
\href{mailto:wangsuijie@hnu.edu.cn}{\texttt{wangsuijie@hnu.edu.cn}}
(Suijie Wang)
\end{samepage}

\end{document}